\documentclass[]{amsart}
\usepackage{amsmath,amsfonts,amssymb,amsthm}
\usepackage{enumerate, color}
\usepackage{mathrsfs}
 \DeclareMathOperator{\ord}{ord}
\DeclareMathOperator{\Diff}{Diff} \DeclareMathOperator{\re}{Re}

\DeclareMathOperator{\sing}{Sing}
\DeclareMathOperator{\sat}{Sat}

\def\N{\mathbb N}
\def\R{\mathbb R}
\def\C{\mathbb C}
\def\Q{\mathbb Q}
\def\Z{\mathbb Z}

\newcommand\parcial[1]{\dfrac{\partial}{\partial #1}}
\def\N{\mathbb N}
\def\R{\mathbb R}
\def\C{\mathbb C}
\def\Q{\mathbb Q}
\def\Z{\mathbb Z}

\theoremstyle{plain}
\newtheorem{theorem}{Theorem}[section]
\newtheorem{theoremintro}{Theorem}
\newtheorem{proposition}[theorem]{Proposition}
\newtheorem{lemma}[theorem]{Lemma}
\theoremstyle{definition}
\newtheorem*{remarkintro}{Remark}

\newtheorem{remark}[theorem]{Remark}
\newcommand{\obra}[3]{{\sc #1} {\em #2}. {#3}.}

\author{Lorena L\'opez-Hernanz}
\address{Lorena L\'opez-Hernanz\\ Departamento de F\'isica y Matem\'aticas, Universidad de Alcal\'a}
\email{lorena.lopezh@uah.es}
\author{Rudy Rosas}
\address{Rudy Rosas\\Departamento de Ciencias, Pontificia Universidad Cat\'olica del Per\'u}
\email{rudy.rosas@pucp.pe}
\date{}
\thanks{First author partially supported by Ministerio de Ciencia e Innovación, Spain, PID2019-105621GB-I00; second author, by Vicerrectorado de Investigaci\'on de la Pontificia Universidad Cat\'olica del Per\'u}
\title{A flower theorem in dimension two}
\begin{document}
\maketitle
\begin{abstract}
We prove a two-dimensional analog of Leau-Fatou flower theorem for non-degenerate reduced tangent to the identity biholomorphisms. 
\end{abstract}

\section{Introduction}

Let $F\in\Diff(\C^n,0)$ be a germ of biholomorphism tangent to the identity. In dimension one, the dynamics of $F$ is completely described by Leau-Fatou flower theorem \cite{Lea, Fat}, which guarantees the existence of simply connected domains with 0 in their boundary, covering a punctured neighborhood of the origin, which are either stable for $F$ or for $F^{-1}$; moreover, in each of these domains $F$ is conjugated to the unit translation. 

In dimension two, no complete description of the dynamics of $F$ is known. Some partial analogs of Leau-Fatou flower theorem have been obtained, guaranteeing either the existence of one-dimensional stable manifolds \cite{Eca, Hak, Aba, LR} or of two-dimensional ones \cite{Hak2, Viv}. With no extra assumptions on $F$, the most general result is due to Abate \cite{Aba}, who showed that $F$ always supports some stable dynamics: either $F$ has a curve of fixed points or there exist one-dimensional stable manifolds of $F$ with 0 in their boundary. 

The proof of the mentioned results is crucially based on a resolution theorem for $F$, which reduces the study of the dynamics of $F$ to some combinatorial data of the resolution and the study of the local dynamics of some reduced models of the transform of $F$. This resolution theorem was introduced by Abate in \cite{Aba} and is based on the corresponding result for vector fields due to Seidenberg \cite{Sei}. 
The resolution theorem, as we explain in the appendix, guarantees the existence of a finite composition of blow-ups $\pi:(M,E)\to(\C^2,0)$, with $E=\pi^{-1}(0)$, which transforms $F$ into a map $\tilde F:(M,E)\to(M,E)$ that fixes $E$ pointwise such that for every $p\in E$ the germ of $\tilde F$ at $p$ is analytically conjugate to one of the following reduced models:
\begin{enumerate}[(i)]
\item $\tilde{F}(x,y)=\left(x+x^{M}y^N[1+A(x,y)],y+ x^{M}y^{N}B(x,y)\right)$,
where $M,N\in\mathbb{Z}_{\ge 0}$,  $(M,N)\notin\{ (0,0), (1,0)\}$, $\ord A\ge1$ and $B\in(y)$ if $N\ge 1$.

\item\label{nondegeneratediffeo} $\tilde{F}(x,y)=\left(x+x^{M+1}y^N\left[a+A(x,y)\right], y+x^My^{N}\left[by+B(x,y)\right]\right)$,
where $M\ge 1$, $N\ge 0$, $ab\neq 0$, ${a}/{b}\notin\mathbb{Q}_{>0}$,  $\ord A\ge 1$, $\ord B\ge 2$ and $B\in (y)$ if $N\ge 1$. 

\item $\tilde{F}(x,y)=\left(x+x^{M}y^N\left[x+A(x,y)\right], y+x^My^{N}B(x,y)\right)$,
where ${M, N\ge 0}$, $M+N\ge 1$,  $\ord A, \ord B\ge 2$, $A\in (x)$ if $M\ge 1$, $B\in(y)$ if $N\ge 1$ and $x+A$ and $B$ have no common factors.

\end{enumerate}

Models (ii) and (iii) correspond to the points $p\in E$ which are singular, in Abate's terminology \cite{Aba} (i.e. the points which are strictly singular for the associated vector field), and model (i) corresponds to the points $p\in E$ which are not singular. The dynamics of biholomorphisms of the form (i) is described in \cite[Proposition 2.1]{Aba} when $M=0$ and in \cite[Theorem 5.3]{Br1} when $N=0$.

In this paper we study the dynamics of biholomorphisms of the form (ii), which are called reduced non-degenerate in analogy with the standard terminology for vector fields. They are the only models that appear at singular points in the resolution of a generic biholomorphism.

Actually, we consider in our study a slightly more general class of biholomorphisms, since we do not impose the non-resonance condition $a/b\not\in\Q_{>0}$. We distinguish two cases, according to the fixed point set of $F$. As a first case, we consider biholomorphisms of the form
\begin{align}\label{eq:noncornerdiffeo}
F(x,y)=\left(x+x^{M+1}\left[a+A(x,y)\right], y+x^M\left[cx+by+B(x,y)\right]\right),
\end{align}
where $M\ge1$, $a,b,c\in\mathbb{C}$, $ab\neq0$,  $\ord A\ge1$ and $\ord B\ge 2$. This type of biholomorphisms appear, for instance, after one blow-up centered at a \emph{non-degenerate characteristic direction} (see \cite{Hak}). Écalle \cite{Eca} and Hakim \cite{Hak} showed that in this case there exist one-dimensional stable manifolds for $F$ with 0 in their boundary, called parabolic curves. Moreover, if $a$ and $b$ satisfy the condition
\begin{align}\label{eq:hipononcorner}
\re\left(b/a\right)>0,
\end{align}  
Hakim proved in \cite{Hak2} (see also \cite{Ari-R}) the existence of two-dimensional stable manifolds with 0 in their boundary, called parabolic domains, where $F$ is analytically conjugate to the map $(z,w)\mapsto(z+1,w)$.

As a second case, we consider biholomorphisms of the form
\begin{align}\label{eq:cornerdiffeo}
F(x,y)=\left(x+x^{M+1}y^N\left[a+A(x,y)\right], y+x^My^{N+1}\left[b+B(x,y)\right]\right),
\end{align}
where $M, N\ge1$,  $ab\neq 0$ and $\ord A,\ord B\ge 1$. If $a$ and $b$ are such that $aM+bN\neq0$ and satisfy the condition
\begin{align}\label{eq:hipo}\re\left(\frac a{aM+bN}\right)>0\quad\text{ and }\quad \re\left(\frac b{aM+bN}\right)>0,
\end{align} 
Vivas proved in \cite{Viv} the existence of parabolic domains for $F$.

The main result of this paper is an analog of Leau-Fatou flower theorem for this type of biholomorphisms.
\begin{theoremintro}\label{FatouFlower}
Let $F$ be a biholomorphism of the form \eqref{eq:noncornerdiffeo} or \eqref{eq:cornerdiffeo}. In the first case, assume that $F$ satisfies condition \eqref{eq:hipononcorner} and set $d=M$ and $N=0$; in the second case, assume that $F$ satisfies condition \eqref{eq:hipo} and set $d=(M,N)$. Then, in any neighborhood of the origin there exist $d$ pairwise disjoint connected open sets $\Omega^+_0,\Omega^+_1,\dots,\Omega^+_{d-1}$, with $0\in\partial\Omega^+_k$ for all $k$, and $d$ pairwise disjoint connected open sets  $\Omega^-_0,\Omega^-_1,\dots,\Omega^-_{d-1}$, with $0\in\partial\Omega^-_k$ for all $k$, such that the following assertions hold:  
\begin{enumerate}[1)]
\item\label{item:parabolicdomains} The sets $\Omega^+_k$ are invariant for $F$ and $F^j\to 0$ as $j\to +\infty$ compactly on $\Omega^+_k$ for all $k$, and the sets $\Omega^-_k$ are invariant for $F^{-1}$ and $F^{-j}\to 0$ as $j\to +\infty$ compactly on $\Omega^-_k$ for all $k$.
\item\label{item:coverneigh} The sets  $\Omega^+_0,\dots,\Omega^+_{d-1},\Omega^-_0,\dots,\Omega^-_{d-1}$ together with the fixed set $\{xy^N=0\}$ cover a neighborhood of the origin. 
\item \label{tercero+} For each $k$, there exist biholomorphisms $\varphi^+_k: \Omega^+_k\to W_k^+\subset\C^2$ and $\varphi^-_k: \Omega^-_k \to W_k^-\subset\C^2$, with $W_k^+,W_k^-\subset\C\times\C^*$ if $N\ge1$, with the following properties:
 \begin{enumerate} [a)] 
 \item\label{tercero1+} $\varphi^+_k$ and $\varphi^-_k$  conjugate $F$ with the map $(z,w)\mapsto (z+1,w)$.
 \item\label{tercero1.5+} The sets $W_k^+$ and $W_k^-$ satisfy
  $$\bigcup_{\pm j\in\N} [W_k^{\pm}-(j,0)]=\C^2 \ \text{ if } N=0; \ \quad \bigcup_{\pm j\in\N} [W_k^{\pm}-(j,0)]=\C\times\C^* \ \text{ if } N\ge1.$$
\end{enumerate}  
\end{enumerate}
\end{theoremintro}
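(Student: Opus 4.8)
The strategy is to treat both cases \eqref{eq:noncornerdiffeo} and \eqref{eq:cornerdiffeo} in a unified way by passing to a convenient chart in which $F$ becomes, up to higher-order terms, the time-one map of a model vector field whose orbits are explicit. Concretely, I would first look for a change of coordinates that rectifies the "leading" dynamics: in the corner case, write $u=x^{a'}y^{b'}$ for suitable exponents so that the product $x^My^N$ is governed by a single variable, and pass to a coordinate like $t=1/(dx^{-M})$ (respectively $t=1/(M u)$) in which the $x$-component of $F$ reads $t\mapsto t+1+o(1)$. Under conditions \eqref{eq:hipononcorner} and \eqref{eq:hipo}, the real parts that appear guarantee that, after fixing a determination of the relevant root, the "$t$-plane" splits into $d$ attracting sectors for $F$ and $d$ for $F^{-1}$ (the classical petal picture), and the transverse variable contracts. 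This is where the hypotheses \eqref{eq:hipononcorner}, \eqref{eq:hipo} are used: they are exactly the conditions that make the transverse direction(s) contracted along the petals, so that the dynamics is a genuine attraction to $0$ rather than a saddle-type behavior.

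Next, inside each such sector I would construct the invariant domain $\Omega^+_k$ (resp. $\Omega^-_k$) as a "thickened petal": a set of the form $\{\,|t|$ large, $t$ in a sector of opening slightly less than $2\pi/d$, transverse variable small depending on $|t|\,\}$, and verify directly that $F(\Omega^+_k)\subset\Omega^+_k$ and $F^j\to 0$ uniformly on compacta, by the standard argument comparing $|F^j|$ with the orbit of the model map and using that the error terms are summable. The count $d=M$ (resp. $d=(M,N)$, meaning $MN$ or a product of the two — to be made precise from the combinatorics of the exponents) comes from the number of determinations of the root used to rectify the leading term; disjointness is automatic once the sectors are chosen with disjoint interiors. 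Assertion \ref{item:coverneigh}, that the $\Omega^{\pm}_k$ together with $\{xy^N=0\}$ cover a neighbourhood of $0$, would follow by showing that every point off the fixed set has, for $|t|$ large enough, its image in one of the sectors — i.e. the attracting and repelling petals overlap to fill the punctured $t$-plane, which is again the classical one-dimensional flower picture transported to this setting.

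For part \ref{tercero+}, the Fatou coordinate $\varphi^{\pm}_k$ is built in the usual way: on $\Omega^+_k$ the limit $\varphi^+_k = \lim_{j\to\infty}(F^j - j)$ (interpreted in the rectified coordinate $t$, with the transverse coordinate carried along) exists because the increments $F^{j+1}-F^j-(\text{unit})$ are summable on the petal; one checks it is injective and satisfies $\varphi^+_k\circ F = \varphi^+_k + (1,0)$, so it conjugates $F$ to $(z,w)\mapsto(z+1,w)$. The image $W^+_k$ is then a right half-plane-like region in the $z$-variable times (a subset of) $\C$ or $\C^*$ in $w$; the covering property \ref{tercero1.5+}, $\bigcup_j [W^{\pm}_k-(j,0)]=\C^2$ or $\C\times\C^*$, follows because translating $W^+_k$ by all negative integer amounts in the first coordinate sweeps out the whole plane (the half-plane grows to fill $\C$), and the $w$-range is already all of $\C$ when $N=0$ and all of $\C^*$ when $N\ge1$ — the latter because $w$ is essentially a power of $y$ times a unit, so it omits $0$ but nothing else.

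The main obstacle I anticipate is the construction and control of the invariant domains in genuinely two-dimensional form: unlike the one-dimensional flower theorem, here one must simultaneously track the petal variable and the transverse variable, and the transverse contraction estimate has to be uniform and compatible with the petal geometry (the "width" of the allowed transverse neighbourhood must shrink as one approaches $0$ along the petal, but not so fast as to destroy invariance). Getting the bookkeeping of error terms right so that both $F(\Omega^+_k)\subset\Omega^+_k$ and the existence of the Fatou coordinate hold on the \emph{same} domain — and then proving that these domains, together with the fixed set, leave no gaps — is the technical heart of the argument. The resonant case $a/b\in\Q_{>0}$ (not excluded here) may require an extra argument to rule out small-divisor or accumulation phenomena, but conditions \eqref{eq:hipononcorner}–\eqref{eq:hipo} should still force the clean attracting/repelling dichotomy.
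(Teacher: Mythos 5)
Your overall strategy (thicken the one-dimensional petal picture, rectify the leading dynamics by a power-type coordinate, build Fatou coordinates as limits along orbits) does match the paper's approach at the level of plan. But two points in your outline are wrong or seriously incomplete.

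First, you misread what $d$ is: in the corner case \eqref{eq:cornerdiffeo} the paper sets $d=(M,N)=\gcd(M,N)$, not $MN$ or ``a product of the two.'' The correct $d$ is the number of attracting directions for the scalar variable $z=x^m y^n$ with $m=M/d$, $n=N/d$: one checks from equation \eqref{eq:x^my^n} that $z\mapsto z-\tfrac1d z^{d+1}+\cdots$, so $z$ sees a degree-$d$ flower, and the exponents $m,n$ are coprime precisely because $d$ is the gcd. If $d$ were a product the whole count of petals would be off.

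Second, and this is the real gap: you never explain how to obtain the \emph{second} Fatou coordinate, i.e.\ the ``transverse coordinate carried along.'' Writing $\varphi^+_k=\lim(F^j-j)$ only makes sense in the one scalar variable you have rectified; it says nothing about the other coordinate. The paper's key technical input here (Proposition \ref{propsi}) is the construction on each domain $U_k$ of a genuine $F$-\emph{invariant} holomorphic function $\psi_k$, obtained as the limit $\psi_k=\lim_j g(F^j(x,y))$ where $g$ is a carefully chosen branch of the first integral $x^{db}y^{-da}$ of the model vector field $ax\partial_x+by\partial_y$ (namely $g=x^p y^q (x^m y^n)^\lambda$ with $qm-pn=1$ and $\lambda=d(ap+bq)$). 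The convergence of this limit, and the fact that $\psi_k$ stays close to $g$, requires the a priori estimates $|x_j|,|y_j|\le|x_j^m y_j^n|^\gamma$ built into the definition of $U_k$; these estimates in turn require the hypotheses $\re a<0$, $\re b<0$ (after normalization). Without such an invariant function you cannot reduce to a $w$-parametrized family of one-dimensional Fatou problems, and ``carrying along'' an arbitrary transverse coordinate will not give $\varphi^+_k\circ F=\varphi^+_k+(1,0)$. There is also a further step you omit: after solving the one-dimensional problem for each fixed $w$ the resulting Fatou coordinate is only locally holomorphic in $w$ up to $w$-dependent constants, and the paper glues these via the additive Cousin problem on $\C$ or $\C^*$ to get a globally holomorphic conjugacy. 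Finally, the ``resonant'' caveat $a/b\in\Q_{>0}$ is a red herring: the paper deliberately allows it, and no extra argument is needed once the invariant function is in hand.
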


Our second result shows that if conditions \eqref{eq:hipononcorner} or \eqref{eq:hipo} are strictly not satisfied, then $F$ has generic finite orbits in some neighborhood of the origin and so no two-dimensional stable sets. 

\begin{theoremintro}\label{saddlebehavior} 
Let $F$ be a biholomorphism of the form \eqref{eq:noncornerdiffeo} or \eqref{eq:cornerdiffeo}. In the first case, assume that
$\re(b/a)<0$; in the second case, assume that either
$$\re\left(\frac{a}{aM+bN}\right)<0\quad\text{ or }\quad \re\left(\frac{b}{aM+bN}\right)<0.$$ 
Then there exists a neighborhood $\mathcal{U}$ of the origin and there exist sets 
$\mathcal{P}^+,\mathcal{P}^-\subset\mathcal{U}$, which are one-dimensional submanifolds of $\mathcal U$ if $F$ is of the form \eqref{eq:noncornerdiffeo} and are empty otherwise, such that the following properties hold:
given $p\in \mathcal{U}\setminus \mathcal P^+$ outside the fixed set, there exist $j\in\mathbb{N}$ such that $F^j(p)\notin\mathcal{U}$; given $p\in \mathcal{U}\setminus\mathcal P^-$ outside the fixed set, there exist $j\in\mathbb{N}$ such that $F^{-j}(p)\notin\mathcal{U}$. If $F$ is of the form \eqref{eq:noncornerdiffeo}, $\mathcal P^+$ is the set of points in $\mathcal U$ that are attracted under $F$ to the parabolic curves of $F$, and $\mathcal P^-$ is the set of points in $\mathcal U$ that are attracted under $F^{-1}$ to the parabolic curves of $F^{-1}$.
\end{theoremintro}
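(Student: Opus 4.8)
The plan is to follow the forward (and, symmetrically, the backward) orbits of points near the origin directly, after straightening the one-dimensional parabolic dynamics that governs the escape and then tracking a single transverse variable. For $F$ of the form \eqref{eq:noncornerdiffeo}, away from the fixed set $\{x=0\}$ the $x$-component is a parabolic germ $x\mapsto x+ax^{M+1}+\cdots$ with $M$ attracting petals $P_1,\dots,P_M$ and $M$ repelling petals covering a punctured neighbourhood of $0$ in the $x$-line; after the change $t=-1/(aMx^{M})$ each attracting petal becomes a half-plane on which the dynamics reads $t\mapsto t+1+o(1)$, so that $|x_j|^{M}\sim 1/(M|a|\,j)$ along any forward orbit with $x_0\in P_k$. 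For $F$ of the form \eqref{eq:cornerdiffeo}, the analogous role is played by the resonant monomial $s=x^{M}y^{N}$, which satisfies $s\mapsto s+(aM+bN)s^{2}+\cdots$ and has one attracting and one repelling petal; in the attracting one, $\sigma=-1/((aM+bN)s)$ linearises the dynamics to $\sigma\mapsto\sigma+1+o(1)$, so that $|s_j|=|x_j|^{M}|y_j|^{N}\sim 1/(|aM+bN|\,j)$. I would build these coordinates on the appropriate sectorial domains, uniformly up to the fixed set, by solving the corresponding Abel equation, and then follow along orbits a transverse variable: $w=y-\phi_k(x)$ in the first case, where $y=\phi_k(x)$ is the parabolic curve of $F$ over $P_k$ (\cite{Eca,Hak}), which satisfies $w_{j+1}=w_j(1+bx_j^{M}+\cdots)+O(w_j^{2})$; and the angular monomial $\rho=x^{M}/y^{N}$ in the second, which satisfies $\rho_{j+1}=\rho_j(1+(aM-bN)s_j+\cdots)$.

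The heart of the argument is the following escape estimate, to be proved uniformly over a small neighbourhood $\mathcal U$ of the origin: if the forward orbit of a point $p\in\mathcal U$ lying off the fixed set stays in $\mathcal U$ for all time, then $p$ lies on the stable set. In the first case, remaining in $\mathcal U$ forces $x_j\to 0$, hence $x_0\in P_k$ for some attracting petal, and integrating the transverse recursion with $|x_j|^{M}\sim 1/(M|a|j)$ gives $|w_j|\sim|w_0|\,j^{-\re(b/a)/M}$; since $\re(b/a)<0$ this tends to $\infty$, and so does $|y_j|$, unless $w_0=0$, i.e.\ unless $p\in\gamma_k$. In the second case, the recursion $\sigma\mapsto\sigma-(aM+bN)+O(\sigma^{-1})$ drives $|\sigma_j|$ to $\infty$, hence $|s_j|\to 0$, and then (solving $|x_j|^{M}=\sqrt{|s_j\rho_j|}$, $|y_j|^{N}=\sqrt{|s_j/\rho_j|}$) one finds $|x_j|\sim C_1\,j^{-\re(a/(aM+bN))}$ and $|y_j|\sim C_2\,j^{-\re(b/(aM+bN))}$ with $C_1,C_2>0$; since $M\re(a/(aM+bN))+N\re(b/(aM+bN))=1$ and, by hypothesis, one of these real parts is strictly negative, the corresponding coordinate blows up. In both cases we contradict the orbit staying in the bounded set $\mathcal U$, so in the first case the orbit leaves $\mathcal U$ whenever $p\notin\gamma_k$, and in the second case every orbit off $\{xy^{N}=0\}$ leaves $\mathcal U$.

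It then remains to identify $\mathcal P^{\pm}$. In the first case, conversely, on each $\gamma_k$ the map $F$ is a one-dimensional attracting parabolic germ, so $F^{j}\to 0$ and the orbit stays in $\mathcal U$; hence $\mathcal P^{+}=\bigcup_k(\gamma_k\cap\mathcal U)$ is precisely the set of points of $\mathcal U$ attracted by $F$ to the parabolic curves of $F$, and, being a finite union of pairwise disjoint holomorphic graphs once $\mathcal U$ is small, it is a one-dimensional submanifold of $\mathcal U$. Applying the same analysis to $F^{-1}$, which is again of the form \eqref{eq:noncornerdiffeo} with $(a,b)$ replaced by $(-a,-b)$---so that $\re(b/a)<0$ still holds---and whose attracting petals are the repelling petals of $F$, yields the statement for $\mathcal P^{-}$, the union of the parabolic curves of $F^{-1}$. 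In the second case, $F^{-1}$ is again of the form \eqref{eq:cornerdiffeo}, with $aM+bN$ replaced by $-(aM+bN)$, so the hypothesis is preserved, and the escape estimate shows that no orbit off $\{xy^{N}=0\}$ stays in $\mathcal U$; thus $\mathcal P^{+}=\mathcal P^{-}=\emptyset$.

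I expect the main difficulty to lie in making the first two steps uniform: constructing the Fatou-type coordinates with estimates valid up to the fixed set---and, in the corner case, up to both branches $\{x=0\}$ and $\{y=0\}$---by solving the relevant Abel equation with the control needed to pass to individual orbits, and then turning the heuristic asymptotics above into honest estimates valid for every orbit as long as it remains in $\mathcal U$. In particular, dominating the $O(w^{2})$ transverse error (and, in the corner case, the coupling between the radial monomial $s$ and the angular monomial $\rho$) tightly enough to extract the exact polynomial rates $j^{-\re(b/a)/M}$, $j^{-\re(a/(aM+bN))}$ and $j^{-\re(b/(aM+bN))}$ is the delicate point; granting these, the identification of $\mathcal P^{\pm}$ and the submanifold assertion follow immediately.
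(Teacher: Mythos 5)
Your plan is essentially the paper's own proof: assume the forward orbit stays in $\mathcal U$, deduce that the resonant monomial $s_j=x_j^{M}y_j^{N}$ (or $x_j$ in the non-corner case) obeys the one-dimensional Leau--Fatou asymptotic $j\,s_j^d\to\text{const}$, straighten the parabolic curve by the sectorial change $z=y-u_k(x)$, and show that a transverse variable must blow up unless it vanishes. The paper's execution is, however, leaner than what you propose, and it's worth noting the two places where your version adds unnecessary work or slight imprecision. First, the paper never derives precise two-sided power-law rates; it only needs a one-sided lower bound $|y_{j+1}|\ge|y_j|(1+\nu|x_j^my_j^n|^d)$ for $j\ge j_0$ together with the divergence of $\sum_l|x_l^my_l^n|^d$ (which follows from $\lim j(x_j^my_j^n)^d=1$). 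In the corner case in particular, after the normalization $aM+bN=-1$ and choosing WLOG $\re b>0$, the paper tracks just $|y_j|$, whereas your argument also controls the angular monomial $\rho=x^M/y^N$ to get asymptotics for $|x_j|$ and $|y_j|$ separately and then invokes the linear identity $M\re(\tilde a)+N\re(\tilde b)=1$; this works but is more than needed. Likewise, no Fatou coordinate or Abel equation is required here --- the only input from Leau--Fatou is the pointwise asymptotic along the orbit, which is elementary. Second, your identification $\mathcal P^+=\bigcup_k(\gamma_k\cap\mathcal U)$ is slightly off: the conclusion of the transverse argument is that $z_{j_0}=0$ for the first time $j_0$ the orbit enters a petal sector, not that $z_0=0$, so $\mathcal P^+$ should be defined (as the paper does) as the set of points of $\mathcal U$ that are attracted to the parabolic curves, which a priori contains the $\gamma_k$ but also their preimages inside $\mathcal U$; these sets are still one-dimensional submanifolds. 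With these adjustments your outline matches the paper's argument.
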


\begin{remarkintro} \label{illustrating} 
To show the necessity of the hypotheses in Theorems \ref{FatouFlower} and \ref{saddlebehavior}, consider, for $M\ge1$, $N\ge0$ and $a,b\in\mathbb{C}^*$ that do not satisfy those hypotheses, the biholomorphism given by the time-1 flow of the vector field 
$X=x^My^N\left[ax{\partial_ x}+by{\partial_ y}\right]$
and let us show that for any neighborhood $\mathcal{U}$ of the origin there exists $p\in\mathcal{U}$ outside the fixed set such that
the orbit $\{F^j(p)\colon j\in\mathbb{Z}\}$ is contained in $\mathcal{U}$ and bounded away from the origin. If $(x(t),y(t))$ is a solution of $X$ and we set $P(t)=x(t)^My(t)^N$, we have 
$P'=(aM+bN)P^2$,  $x'=aPx$ and $y'=bPy$.
Suppose first that $aM+bN=0$. 
Then $P(t)=P(0)$, $x(t)=x(0)e^{aP(0)t}$ and $y(t)=y(0)e^{bP(0)t}$, so
$$(x_j,y_j)=F^j(x,y)=\left(xe^{ax^My^Nj},ye^{bx^My^Nj}\right),\quad j\in\mathbb{Z}.$$
Note that, since $a/b=-N/M\in\mathbb R$, in any neighborhood $\mathcal{U}$ of the origin we can take $(x,y)$ arbitrarily small with $xy\neq0$ such that $\re \left(ax^{M}y^N\right)=\re \left(bx^{M}y^N\right)=0$. In this case, the expression 
above shows that $|x_j|=|x|$, $|y_j|=|y|$ for all $j\in\mathbb{Z}$, so $\{(x_j,y_j):j\in\Z\}$ is bounded away from the origin and contained in $\mathcal{U}$ provided $(x,y)$ is small enough. Suppose now that $aM+bN\neq 0$, $\re\left(\frac{a}{aM+bN}\right)\ge0$ and $\re\left(\frac{b}{aM+bN}\right)=0$. If we denote $x(0)=x$ and $y(0)=y$, we get by integration that $P(t)=x^My^N(1-(aM+bN)x^My^Nt)^{-1}$ and
\begin{align*}
x(t)&=x\left[1-(aM+bN)x^My^Nt\right]^{-a/(aM+bN)}\\
y(t)&=y\left[1-(aM+bN)x^My^Nt\right]^{-b/(aM+bN)},
\end{align*}
defined for all $t\in\mathbb{R}$ provided $(aM+bN){x^My^N}\notin \mathbb{R}$. If we set $\tfrac{a}{aM+bN}=\alpha_1+i\alpha_2$, with $\alpha_1\ge0$, and $\frac{b}{aM+bN}=i\beta$, we have
$$|x(t)|=|x||1-(aM+bN)x^My^Nt|^{-\alpha_1}e^{\alpha_2\arg\left(1-(aM+bN)x^My^Nt\right)}$$
and 
$$|y(t)|=|y|e^{\beta\arg\left(1-(aM+bN)x^My^Nt\right)},$$
so $$|x(t)|\le C e^{|\alpha_2|\pi}|x|;\quad e^{-|\beta|\pi}|y|\le |y(t)|\le e^{|\beta|\pi}|y|$$
for some $C>0$ and for all $t\in\R$. Therefore, given a neighborhood $\mathcal{U}$ of the origin, if we choose $(x,y)$ as above 
and sufficiently small then its orbit is contained in $\mathcal{U}$ and bounded away from the origin.
\end{remarkintro}

\section{Existence of parabolic domains and invariant functions}\label{constructing}
Sections \ref{constructing} to \ref{secfatou} are devoted to the proof of Theorem~\ref{FatouFlower}. In this section we show, for $F$ satisfying the hypotheses of Theorem~\ref{FatouFlower}, the existence of parabolic domains for $F$ and the existence of an $F$-invariant function in each of these domains. 

We will use the following statement of Leau-Fatou flower theorem (see, for instance, \cite[Théorème 2.3.1]{Lor}). Given $d\ge1$, we denote, for $\varepsilon>0$ and $\theta\in(0,\pi/2)$, 
$$S(\varepsilon,\theta)=\left\{z\in\C:|z^d|<\varepsilon, |\arg(z^d)|<\theta\right\},$$
which is a union of $d$ sectors of opening $2\theta/d$ bisected by the half-lines $e^{2\pi i k/d}\R^+$, for $k\in\{0,\dots,d-1\}$. We also define the set
$$\widetilde S(\varepsilon,\theta)=S(\varepsilon,\theta)\cup \left\{z\in\C: \left|z^d-\tfrac{\varepsilon}2 e^{-i\theta}\right|<\tfrac{\varepsilon}2\right\}\cup \left\{z\in\C:\left|z^d-\tfrac{\varepsilon}2 e^{i\theta}\right|<\tfrac{\varepsilon}2\right\},$$
which is a union of $d$ sectorial domains of opening $(\pi+2\theta)/d$, also bisected by the half-lines $\{z\in\C:z^d\in\R^+\}$. 

\begin{theorem}[Leau-Fatou]\label{the:flowerdim1}
Let $f:(\C,0)\to(\C,0)$ be a biholomorphism of the form $f(z)=z-\frac1 dz^{d+1}+O(z^{d+2})$, $d\ge1$. For any $\theta\in(0,\pi/2)$ there exist constants $\varepsilon_0=\varepsilon_0(\theta)>0$, $c=c(\varepsilon_0, \theta)>1$ and $C=C(\varepsilon_0, \theta)>0$ such that for every $\varepsilon\le \varepsilon_0$ and every component $\widetilde S$ of $\widetilde S(\varepsilon,\theta)$ we have that $f(\widetilde S)\subset \widetilde S$ and
$$\lim_{j\to\infty}j(f^j(z))^d=1 \quad \text{ and }\quad |f^j(z)|^d\le c\dfrac{|z|^d}{1+j|z|^d}$$
for every $z\in \widetilde S$ and $j\in\N$. Moreover, if $S$ is the component of $S(\varepsilon,\theta)$ contained in $\widetilde S$, it also holds that 
$f(S)\subset S$ and $f^j(z)\in S$ for every $z\in \widetilde S$ and every $j\ge C/|z|^d$.
\end{theorem}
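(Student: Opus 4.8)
The plan is to prove the classical Leau-Fatou flower theorem in the form stated, so I would either cite it directly or reconstruct the standard proof via the Fatou coordinate. Since the statement is a packaged version with explicit constants and the $\widetilde S$/$S$ domains, I would structure the argument around the \emph{Fatou coordinate} $w=\Phi(z)$ obtained after the change of variable that straightens the dynamics. Concretely, the map $f(z)=z-\frac1d z^{d+1}+O(z^{d+2})$ is, after passing to the $d$-fold covering $z\mapsto z^d=:\zeta$ (or working sectorially on each of the $d$ petals directly), conjugate to a map of the form $\zeta\mapsto \zeta-\zeta^2+O(\zeta^3)$, i.e. in the coordinate $u=-1/\zeta$ it becomes $u\mapsto u+1+o(1)$ as $u\to\infty$. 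So the key steps are: (1) reduce to the one-petal case by the $z\mapsto z^d$ substitution, noting that $S(\varepsilon,\theta)$ and $\widetilde S(\varepsilon,\theta)$ are exactly the preimages of a single sector $\{|\zeta|<\varepsilon,\ |\arg\zeta|<\theta\}$ and a single disk-union in the $\zeta$-plane; (2) in the $u=-1/\zeta$ chart, show that for $\varepsilon$ small enough the relevant region maps into itself and $f$ reads $u\mapsto u+1+E(u)$ with $|E(u)|\le c_0/|u|$ on that region.

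The heart of the matter is the quantitative estimate. After writing $f$ in the $u$-coordinate as $\tilde f(u)=u+1+E(u)$ with $E(u)=O(1/u)$, I would iterate: $\tilde f^j(u)=u+j+\sum_{i=0}^{j-1}E(\tilde f^i(u))$. One first shows by induction that $\re(\tilde f^i(u))\ge \re(u)+i/2$ (say), using that $\re(u)$ is large on the domain and that $|E|$ is small; this gives $|\tilde f^i(u)|\gtrsim |u|+i$, hence $\sum_i |E(\tilde f^i(u))| = O(\log j)$ or even $O(1)$ depending on how carefully one tracks it, and in particular $\tilde f^j(u)/j\to 1$, which is exactly $\lim_j j(f^j(z))^d=1$ after translating back ($\zeta=-1/u$, so $j\zeta = -j/u \to -(-1)=1$... up to the bookkeeping sign, which the normalization $f(z)=z-\frac1d z^{d+1}+\cdots$ fixes). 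For the bound $|f^j(z)|^d\le c|z|^d/(1+j|z|^d)$: in the $\zeta$-variable this says $|\zeta_j|\le c|\zeta|/(1+j|\zeta|)$, i.e. $|1/\zeta_j|\ge c^{-1}(|1/\zeta|+j)$, which follows from $|\tilde f^j(u)|\ge |u|+j/2$ and a comparison of $|\tilde f^j(u)|$ with $|\re \tilde f^j(u)|$ on the domain (where $\arg u$ stays bounded). The invariance $f(\widetilde S)\subset\widetilde S$ and $f(S)\subset S$ is a geometric computation: in the $\zeta$-plane $\widetilde S$ is a union of two disks and a sector, and $\zeta\mapsto \zeta-\zeta^2+O(\zeta^3)$ moves points ``inward'' toward $0$ along roughly horizontal trajectories in the $u$-plane, so for $\varepsilon$ small the boundary arcs map strictly inside; the last claim $f^j(z)\in S$ for $j\ge C/|z|^d$ follows because once $\re(\tilde f^j(u))$ has increased by a definite amount the point has entered the sub-sector $S$.

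The main obstacle is really just the bookkeeping: choosing $\theta'<\theta$ and the intermediate half-plane-type regions in the $u$-coordinate so that (a) $\widetilde S$ lands inside the region where the estimate $|E(u)|\le c_0/|u|$ holds with a usable constant, (b) the invariance of both $\widetilde S$ and the smaller $S$ holds simultaneously with constants depending only on $\theta$ and $\varepsilon_0$ (not on the individual $z$), and (c) extracting the sharp form $|f^j(z)|^d\le c|z|^d/(1+j|z|^d)$ rather than a weaker $O(1/j)$ bound — this requires the lower bound $|\tilde f^j(u)|\ge |u|+j/2$ to be genuinely additive from the first step, which in turn forces $\varepsilon_0$ to be chosen small enough that $|E|\le 1/4$ throughout. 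None of this is deep; it is the standard Leau-Fatou argument, and indeed one may simply invoke \cite{Lor} as the excerpt already does. For a self-contained treatment I would carry out steps (1)--(3) above in order, doing the $z\mapsto z^d$ reduction first, then the $u=-1/z^d$ normal-form estimate, then the induction giving the real-part growth, and finally reading off the three displayed conclusions by undoing the coordinate changes.
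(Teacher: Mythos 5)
The paper does not actually prove Theorem~\ref{the:flowerdim1}: it is stated as a known result and referred to \cite[Th\'eor\`eme~2.3.1]{Lor}. Your outline is the standard Leau--Fatou argument via the Fatou (Abel) coordinate, and its overall structure is sound; I will flag two inaccuracies, one of which you already note and one of which is more substantive but does not ultimately break the proof.

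First, with the normalization $f(z)=z-\tfrac1d z^{d+1}+O(z^{d+2})$ the attracting petals are bisected by the rays $e^{2\pi i k/d}\R^+$ (as in the definition of $S(\varepsilon,\theta)$), so the correct linearizing chart is $u=+1/z^d$, not $u=-1/z^d$; you flag the sign bookkeeping yourself. Second, and this is the real issue: after setting $\zeta=z^d$ one gets $\zeta_1=\zeta-\zeta^2+O(\zeta^{2+1/d})$, not $O(\zeta^3)$ (the error in $f$ is $O(z^{d+2})$, which is $O(\zeta^{1+2/d})$, not $O(\zeta^2)$), so in the $u$-coordinate the error term is $E(u)=O(|u|^{-1/d})$ rather than $O(1/|u|)$. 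Consequently, for $d\ge2$ the sum $\sum_{i<j}|E(\tilde f^i(u))|$ is not $O(\log j)$ or $O(1)$ as you assert, but only $O(j^{1-1/d})=o(j)$. This overstatement does not invalidate the argument: the limit $j(f^j(z))^d\to1$ only requires $u_j=j+o(j)$, hence only $\sum_{i<j}|E|=o(j)$; and the sharp estimate $|f^j(z)|^d\le c|z|^d/(1+j|z|^d)$ rests, as you correctly observe, on the additive inequality $\re\tilde f^{i+1}(u)\ge\re\tilde f^i(u)+\tfrac12$, which needs only $|E|\le\tfrac12$ on the domain, i.e. only $E\to0$. But a careful write-up should use the correct exponent $-1/d$, since the claimed logarithmic summability is false.

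Your geometric argument for invariance of $\widetilde S$ and $S$ and for $f^j(z)\in S$ once $j\ge C/|z|^d$ is the standard one and is fine. Since the paper simply cites this theorem, your reconstruction is a different (self-contained) route from the paper's; it is the classical proof one finds in \cite{Mil} or \cite{Lor}, and citing one of those, as the paper does, is equally acceptable.
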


\begin{remark}\label{rk:repelling}
An analogous result follows for $f^{-1}(z)=z+\frac1d z^{d+1}+O(z^{d+2})$ and the sets
$$S^-(\varepsilon,\theta)=\left\{z\in\C:|z^d|<\varepsilon, |\arg(z^d)-\pi|<\theta\right\}$$
and 
$$\widetilde S^-(\varepsilon,\theta)=S^-(\varepsilon,\theta)\cup \left\{z\in\C: \left|z^d-\tfrac{\varepsilon}2 e^{i(\pi-\theta)}\right|<\tfrac{\varepsilon}2\right\}\cup \left\{z\in\C:\left|z^d-\tfrac{\varepsilon}2 e^{i(\pi+\theta)}\right|<\tfrac{\varepsilon}2\right\}.$$
\end{remark}

\strut

Consider first a biholomorphism $F=(F_1,F_2)$ of the form \eqref{eq:noncornerdiffeo}. Up to a linear change of coordinates of the form $(x,y)\mapsto(\alpha x,y)$, we can assume that $a\in\R^-$. In this case, Hakim proved in \cite{Hak} that if $r$ is small enough then for any $k\in\{0,\dots,M-1\}$ there exists an injective holomorphic map $u_k:D_{r,k}\to\C$, with $|u_k(x)|\le K|x\log x|$ for some $K>0$ and for all $x\in D_{r,k}$, such that $u_k(F_1(x,u_k(x)))=F_2(x,u_k(x))$, where $D_{r,k}$ is the component of $\{x\in\C:|x^M-r|<r\}$ bisected by $e^{2\pi i k/M}\R^+$. Moreover, with the small modification of her proof introduced in \cite[Lemma 4.4]{Lop}, we can enlarge the domain of definition of $u_k$ to the connected component of $\widetilde S(\varepsilon,\theta)$ bisected by $e^{2\pi i k/M}\R^+$, for any $\theta\in(0,\pi/2)$ and for $\varepsilon$ small enough. We denote this connected component $\widetilde S_k(\varepsilon,\theta)$. Then, making the sectorial change of coordinates 
\begin{align}\label{sectorialchange}(x,y)\in \widetilde S_k(\varepsilon,\theta)\times \C\mapsto (x,y-u_k(x)),
\end{align}
we can write
$$F(x,y)=\left(x+x^{M+1}\left[a+O_1(x,y)\right], y+x^My\left[b+O_1(x,y)\right]\right),$$
where we use the notation $O_1(x,y)=O(x,x\log x,y)$. Hence, in sectorial coordinates we can write $F$ in the form \eqref{eq:cornerdiffeo} with $N=0$ and $O_1$ instead of $O$, so the proof of Theorem~\ref{FatouFlower} in this case will be essentially derived from case \eqref{eq:cornerdiffeo} for $N=0$ (observe that condition \eqref{eq:hipononcorner} is exactly condition \eqref{eq:hipo} with $N=0$). Thus, in order to prove Theorem~\ref{FatouFlower} we consider from now on a map $F$ of the form \eqref{eq:cornerdiffeo} allowing $N\ge 0$ and satisfying condition \eqref{eq:hipononcorner}. In Section~\ref{secfatou} we provide further clarifications to complete the case of biholomorphisms of the form \eqref{eq:noncornerdiffeo}.

Set $d=M$ if $N=0$ and $d=\gcd(M,N)$ otherwise. Applying a linear change of coordinates of the form
$(x,y)\mapsto (\alpha x,\beta y)$, we obtain the same expression for $F$ but with $a$ and $b$ respectively
replaced by 
$\tilde a=-\frac{a}{aM+bN}$ and $ \tilde b=-\frac{b}{aM+bN},$
so hypothesis \eqref{eq:hipo} becomes $\re \tilde{a}<0$ and $\re \tilde{b}<0$, and we have $\tilde{a}M+\tilde{b}N=-1$. Thus, we directly assume that 
$$\re a<0, \ \re b<0 \ \text{ and } \ aM+bN=-1.$$  

Set $m=\frac{M}{d}$ and $n=\frac{N}{d}$. Given a point $(x,y)$ in the domain of definition of $F$, we denote $(x_j,y_j)=F^j(x,y)$, for all $j\ge 0$. Observe that
\begin{equation}\label{eq:x^my^n}
x_1^my_1^n=x^my^n-\tfrac 1d(x^my^n)^{d+1}+(x^my^n)^{d+1}O(x,y),
\end{equation} 
so if $x_j,y_j$ are small we can argue as in Theorem~\ref{the:flowerdim1} and we find $d$ attracting directions for the variable $z=x^my^n$, given by the half-lines $e^{2\pi i k/d}\R^+$, $0\le k\le d-1$.

Fix $\gamma>0$ such that $\re a+\frac\gamma d<0$ and $\re b+\frac\gamma d<0$, and denote $\epsilon=\min\{1,n\}$. For any $\theta\in\left(0, \frac{\pi}2\right)$ and $\varepsilon, \delta>0$ we consider, for any $k\in\{0,\dots,d-1\}$, the sets $D_k=D_k(\varepsilon,\theta,\delta)$ and $U_k=U_k(\varepsilon,\theta)$ defined by
$$D_k=\left\{(x,y)\in\C^2:|x^my^n|<\varepsilon^{1/d}, \left|\arg(x^my^n)-\tfrac{2\pi k}d\right|<\tfrac \theta d, |\epsilon x|<\delta, |y|<\delta\right\}$$
and 
$$U_k=\left\{(x,y)\in\C^2:|x^my^n|<\varepsilon^{1/d}, \left|\arg(x^my^n)-\tfrac{2\pi k}d\right|<\tfrac\theta d, |\epsilon x|\le|x^my^n|^\gamma, |y|\le|x^my^n|^\gamma\right\}.$$

\begin{proposition}\label{parabola} 
If $\varepsilon$ and $\theta$ are sufficiently small then $F(U_k)\subset U_k$ and $F^j\to 0$ as $j\to \infty$ uniformly on $U_k$, for each $k$. Moreover, any orbit of $F$ that converges to 0 eventually lies in $U_k$ for some $k$.
\end{proposition}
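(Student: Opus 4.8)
The plan is to reduce the whole statement to a one-dimensional analysis of the combined variable $z=x^my^n$, equivalently of $w:=z^d=x^My^N$, while simultaneously controlling the two auxiliary ratios
\[
\rho=\frac{|\epsilon x|}{|z|^{\gamma}}=\frac{|\epsilon x|}{|w|^{\gamma/d}},\qquad \sigma=\frac{|y|}{|z|^{\gamma}}=\frac{|y|}{|w|^{\gamma/d}},
\]
so that $U_k$ is exactly the set of $(x,y)$ for which $z$ lies in the $k$-th component of $S(\varepsilon,\theta)$ and $\rho\le1$, $\sigma\le1$; when $N=0$ one has $\epsilon=0$, so the condition on $\rho$ is vacuous. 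Note that on $U_k$ we have $|x|,|y|\le|z|^{\gamma'}$ for some exponent $\gamma'>0$ depending only on $\gamma$ and $d$, so $x,y$ tend to $0$ with $\varepsilon$, uniformly on $U_k$.

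First I would collect the one-step estimates. From \eqref{eq:cornerdiffeo} one reads $x_1=x(1+w(a+A))$ and $y_1=y(1+w(b+B))$, while \eqref{eq:x^my^n} gives $z_1=z-\tfrac1d z^{d+1}+z^{d+1}\eta$ with $\eta=O(x,y)$, so that $|\eta|\le C(|x|+|y|)$ is arbitrarily small on $U_k$ for $\varepsilon$ small. Using $\ord A,\ord B\ge1$ and the normalization $aM+bN=-1$, a Taylor expansion yields, on $U_k$,
\[
\log|x_1|-\log|x|=\re(aw)+o(|w|),\quad \log|y_1|-\log|y|=\re(bw)+o(|w|),\quad \log|w_1|-\log|w|=-\re w+o(|w|),
\]
where the $o(|w|)$ terms are bounded by $C(|w|(|x|+|y|)+|w|^2)$, hence $o(|w|)$ uniformly on $U_k$ as $\varepsilon\to0$. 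Combining these three identities (and recalling $|z|^{\gamma}=|w|^{\gamma/d}$) gives, on $U_k$, for $\rho$ when $N\ge1$ and for $\sigma$ always,
\[
\log\rho_1-\log\rho=\re\bigl((a+\tfrac\gamma d)w\bigr)+o(|w|),\qquad \log\sigma_1-\log\sigma=\re\bigl((b+\tfrac\gamma d)w\bigr)+o(|w|).
\]
Since $\re a+\tfrac\gamma d<0$ and $\re b+\tfrac\gamma d<0$ by the choice of $\gamma$, and since $w$ stays in a narrow sector around $\R^+$ for $\theta$ small, this yields, for $\varepsilon$ and $\theta$ small, $\log\rho_1-\log\rho\le -c_0|w|<0$ and $\log\sigma_1-\log\sigma\le -c_0|w|<0$ for some $c_0>0$. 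In particular $\rho$ and $\sigma$ are strictly decreasing along any orbit that remains in $U_k$.

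Next I would run the Leau--Fatou argument on $z$. Along an orbit contained in $U_k$, the recursion $z_{j+1}=z_j-\tfrac1d z_j^{d+1}+z_j^{d+1}\eta_j$ has $|\eta_j|$ arbitrarily small, so it is dominated by the model of Theorem~\ref{the:flowerdim1}; an inspection of that proof shows it goes through for this non-autonomous recursion, giving that the $k$-th component of $S(\varepsilon,\theta)$ is forward invariant for the orbit and that $|z_j^d|\le c|z_0^d|/(1+j|z_0^d|)$ for all $j$, whence $|w_j|=|z_j^d|\le c/j$ and $\sum_j|w_j|=+\infty$. Now $F(U_k)\subset U_k$ follows by induction on $j$: if $F^i(x,y)\in U_k$ for $0\le i\le j$, this estimate places $z_{j+1}$ in the $k$-th component of $S(\varepsilon,\theta)$, while the monotonicity of $\rho,\sigma$ gives $\rho_{j+1}\le\rho_j\le1$ and $\sigma_{j+1}\le\sigma_j\le1$; hence $F^{j+1}(x,y)\in U_k$. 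The uniform convergence $F^j\to0$ on $U_k$ is then immediate, since $|x_j|,|y_j|\le|z_j|^{\gamma'}=|w_j|^{\gamma'/d}\le(c/j)^{\gamma'/d}\to0$ uniformly. Finally, let $(x_j,y_j)\to0$ be an orbit; we may assume $w_j\ne0$ for all $j$ (otherwise the orbit eventually lies on the $F$-invariant set $\{x^My^N=0\}$ and, being convergent to $0$, is the constant orbit at the origin, by injectivity of $F$). Then $\eta_j=O(x_j,y_j)\to0$, and passing to the Fatou-type coordinate $u_j=1/w_j$ one finds $u_{j+1}-u_j=1+o(1)$, so $u_j/j\to1$; hence $|w_j|\asymp1/j$ and, since also $z_{j+1}/z_j\to1$, $\arg z_j$ converges to one of the attracting directions $2\pi k/d$. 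Thus for $j$ large $z_j$ lies in the $k$-th component of $S(\varepsilon,\theta)$ and $x_j,y_j$ are small enough for the estimates of the previous paragraph to apply; as $\sum_j|w_j|=+\infty$, those estimates force $\log\rho_j\to-\infty$ and $\log\sigma_j\to-\infty$, so $\rho_j\le1$ and $\sigma_j\le1$ for $j$ large, that is, $F^j(x,y)\in U_k$ eventually.

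I expect the main obstacle to be twofold. First, the induction is circular in nature: the flower estimate for $z$ is what keeps $z_{j+1}$ inside $S(\varepsilon,\theta)$, but the smallness of the perturbation $\eta_j$ is controlled only through the bounds $\rho_j,\sigma_j\le1$, whose propagation in turn uses the strict inequalities $\re a+\gamma/d<0$ and $\re b+\gamma/d<0$; so all the estimates must be established together, and each remainder $O(|w|(|x|+|y|))+O(|w|^2)$ on $U_k$ has to be made small enough (by shrinking $\varepsilon,\theta$) to leave room for those strict inequalities. Second, one must re-derive the quantitative content of Theorem~\ref{the:flowerdim1} for a sequence, rather than a germ of a map, satisfying $z_{j+1}=z_j-\tfrac1d z_j^{d+1}+o(z_j^{d+1})$, together with the companion fact that such a sequence converging to $0$ does so tangentially to one of the attracting directions with $|z_j^d|\asymp1/j$; both are routine via the coordinate $u=1/z^d$, but this is where the actual technical work lies.
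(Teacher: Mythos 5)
Your argument is correct and follows essentially the same route as the paper: pass to the auxiliary variable $z=x^my^n$, run the one-dimensional Leau--Fatou estimate on $z$, and control the two ratios $|\epsilon x|/|z|^\gamma$ and $|y|/|z|^\gamma$ via the strict inequalities $\re a+\gamma/d<0$, $\re b+\gamma/d<0$, concluding the converse from the divergence of $\sum_j|z_j|^d$. The only cosmetic difference is that you phrase the contraction of the ratios in logarithmic form, whereas the paper bounds the multiplicative one-step quotient $\bigl|1+(x^my^n)^d[b+\gamma/d+O(x,y)]\bigr|$ by $1-\eta|x^my^n|^d$ directly; these are equivalent. Your worry about circularity is in fact unfounded once you note (as the paper does) that the one-step estimates hold on the larger set $D_k(\varepsilon,\theta,\delta)\supset U_k$, where smallness of $|x|,|y|$ is guaranteed by $\delta$ alone, so the induction closes cleanly.
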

\begin{proof} We denote, as above, $(x_j,y_j)=F^j(x,y)$. To show the first two properties of $U_k$ we assume, without loss of generality, that $k=0$.
By equation~\eqref{eq:x^my^n}, arguing as in Leau-Fatou flower theorem, for any $\theta\in\left(0,\frac{\pi}2\right)$ we find
$\delta_\theta$ depending only on $\theta$ such that for $\varepsilon$ small enough we have that if $(x,y)\in D_0(\varepsilon,\theta, \delta_\theta)$ then
$$|x_1^my_1^n|<\varepsilon^{1/d} \ \ \text{and} \ \ \left|\arg(x_1^my_1^n)\right|<\theta/d.$$
On the other hand, if $x,y\in\C^*$ are small enough, we have
\begin{align*}
\dfrac{|y_1|}{|x_1^my_1^n|^\gamma}&=\dfrac{|y|\left|1+b(x^my^n)^d(1+O(x,y))\right|}{|x^my^n|^\gamma\left|1-\frac{\gamma}{d} (x^my^n)^d(1+O(x,y))\right|}\\
&=\dfrac{|y|}{|x^my^n|^\gamma}\left|1+(x^my^n)^d\left[b+\tfrac{\gamma}{d}+O(x,y)\right]\right|,
\end{align*}
and if $n\ge1$, analogously,
\begin{align*}
\dfrac{|x_1|}{|x_1^my_1^n|^\gamma}&=\dfrac{|x|}{|x^my^n|^\gamma}\left|1+ (x^my^n)^d\left[a+\tfrac{\gamma}{d}+O(x,y)\right]\right|.
\end{align*}
Hence, since $\re a+\frac\gamma d<0$ and $\re b+\frac\gamma d<0$, there exist $\delta_0>0$ and $\eta>0$ such that, 
if $\varepsilon$ and $\theta$ are small enough, then
\begin{equation}\label{eq:boundeta}
\dfrac{|y_1|}{|x_1^my_1^n|^\gamma}\le
\dfrac{|y|}{|x^my^n|^\gamma}\left(1-\eta\left|x^my^n\right|^d\right)\le \dfrac{|y|}{|x^my^n|^\gamma}
\end{equation}
and, if $n\ge1$,
$$ \dfrac{|x_1|}{|x_1^my_1^n|^\gamma}\le \dfrac{|x|}{|x^my^n|^\gamma}\left(1-\eta\left|x^my^n\right|^d\right)\le 
\dfrac{|x|}{|x^my^n|^\gamma}$$
for any $(x,y)\in D_0(\varepsilon,\theta,\delta_0)$. Since $U_0(\varepsilon,\theta)\subset D_0(\varepsilon,\theta,\delta)$ for all $\varepsilon>0$ such that 
$\varepsilon^{\gamma/d}<\delta$, we conclude that if $\varepsilon$ and $\theta$ are sufficiently small we have that $(x_1,y_1)\in U_0$ for any $(x,y)\in U_0$ and therefore $U_0$ is invariant by $F$.  
Let us show that the orbit of any point $(x,y)\in U_0$ tends uniformly to the origin. Arguing again as in Leau-Fatou theorem, there exists a constant $c>0$ such that
\begin{equation}\label{LFbounds0}
|x_j^my_j^n|^d\le c\frac{|x^my^n|^d}{1+j|x^my^n|^d}\le \frac c{j}
\end{equation}
for every $(x,y)\in U_0$ and every $j$. Since $|y_j|\le|x_j^my_j^n|^\gamma$ for all $j$, we have that $|y_j|\le (c/j)^{\gamma/d}$
for every $j$, which shows that $y_j\to 0$ uniformly on $U_0$. The uniform convergence to 0 of $x_j$ follows analogously in case $n\ge1$ and from Leau-Fatou theorem in case $n=0$.

Consider now an orbit $(x_j,y_j)$ converging to 0, and let us show that it eventually lies in $U_k=U_k(\varepsilon,\theta)$ for some $k$. Since $x_j^my_j^n\to0$ we have, as in Leau-Fatou theorem, that there exist $j_0\in\mathbb N$ and $k\in\{0,\dots, d-1\}$ such that $|x_j^my_j^n|<\varepsilon^{1/d}$ and $|\arg(x_j^my_j^n)-\frac{2\pi k}{d}|<\frac\theta d$ for all $j\ge j_0$, and moreover $\lim_ {j\to\infty} j(x_j^my_j^n)^d=1$. Therefore we have, up to increasing $j_0$ if necessary, that $(x_j,y_j)\in D_k(\varepsilon,\theta,\delta_0)$ for all $j\ge j_0$. Then, using inequality~\eqref{eq:boundeta}, we obtain
\begin{align*}
\dfrac{|y_j|}{|x_j^my_j^n|^\gamma}&\le
\dfrac{|y_{j_0}|}{|x_{j_0}^my_{j_0}^n|^\gamma}\prod\limits_{l=j_0}^{j-1}
\left(1-\eta\left|x_l^my_l^n\right|^d\right)
\end{align*} 
for all $j\ge j_0$. Since $\lim_ {j\to\infty} j(x_j^my_j^n)^d=1$, we have that $\sum_{l\ge0}|x_l^my_l^n|^d=+\infty$, so the product above tends to 0 and so does $|y_j|/|x_j^my_j^n|^\gamma$. Hence  $|y_j|\le|x_j^my_j^n|^\gamma$ for $j$ large enough, and if $n\ge1$ analogously 
$|x_j|\le |x_j^my_j^n|^\gamma$ for $j$ large enough, so $(x_j,y_j)\in U_k$.
\end{proof}

\begin{remark}\label{rk:largerdomains}
It also holds that if $\varepsilon$, $\theta$ and $\delta$ are small enough then the domain $D_k=D_k(\varepsilon,\theta,\delta)$ is invariant for $F$, for any $k\in\{0,\dots,d-1\}$, and that $F^j(x,y)\to 0$ for any $(x,y)\in D_k$, so by Proposition~\ref{parabola} the orbit of any point in $D_k(\varepsilon,\theta,\delta)$ eventually lies in $U_k(\varepsilon,\theta)$. The proof of the invariance of $D_k$ goes exactly as the the proof of the invariance of $U_k$ in Proposition~\ref{parabola}, taking into account that, since $\re a,\re b<0$, we can find a constant $\rho>0$ such that for $\varepsilon,\theta$ and $\delta$ small enough we have $|y_1|\le|y|\left(1-\rho|x^my^n|^d\right)\le|y|$ for all $(x,y)\in D_k$ and, if $n\ge1$, $|x_1|\le |x|\left(1-\rho|x^my^n|^d\right)\le|x|$. To show that $(x_j,y_j)\to0$ as $j\to\infty$ for all $(x_0,y_0)\in D_k$ we can also argue as in the proof of Proposition~\ref{parabola}: by the inequality above, we have that $|y_j|\le |y_0|\prod\limits_{l=0}^{j-1}\left(1-\rho\left|x_l^my_l^n\right|^d\right)$ and, if $n\ge1$, 
$|x_j|\le |x_0|\prod\limits_{l=0}^{j-1}\left(1-\rho\left|x_l^my_l^n\right|^d\right).$ Hence, since the product tends to 0 when $j\to\infty$, so do $x_j$ and $y_j$. These invariant domains $D_k$ are the ones found by Vivas in \cite[Lemma 1]{Viv}, and the reason to consider the smaller domains $U_k$ is that they are the ones where we will be able to construct an invariant function for $F$, as we will see next.
\end{remark}

In the rest of this section, we will show the existence of an invariant function for $F$ on each of the invariant domains $U_0,\dots, U_{d-1}$.
Since $F$ is close to the time-1 flow of the vector field 
$$x^My^N\Bigl( ax\parcial x+ by\parcial y\Bigr)$$
and the vector field $ax\partial_x+ by\partial_y$ has the Liouvillian first integrals $x^{-\eta b}y^{\eta a}$, 
$\eta\in\mathbb{C}^*$, our aim is to find an invariant function close to one of these first integrals,
for which we start defining a suitable branch $g(x,y)$ of $x^{d b}y^{-d a}$ on $U_k$, where $0\le k\le d-1$. From now on, if $z\in\mathbb{C}\backslash [-\infty,0]$ and $\lambda\in\mathbb{C}\backslash\mathbb{Z}$, we denote $z^{\lambda}=e^{\lambda\log z},$
where $\log$ is the main branch of the logarithm. Note that, since $m$ and $n$ are coprime if $n>0$,
 there exist $p,q\in\mathbb{N}$ such that $qm-pn=1$; in case $n=0$, we set $p=0$, $q=1$. Denote $\lambda=d(ap+bq)$ and define $g:U_k\to\C^*$ as
\begin{equation}\label{eq:defg}
g(x,y)=x^py^{q}(x^my^n)^{\lambda},
\end{equation}
which is well defined since $x^my^n$ belongs to $\mathbb{C}\backslash [-\infty,0]$ for all $(x,y)\in U_k$. Notice that in case $n=0$ we have that $g(x,y)=yx^{Mb}$. If $x$ and $y$ belong to a small sector bisected by $\R^+$ we have, 
in view of the identity $dma+dnb=-1$, that
$$g(x,y)=x^py^{q}(x^my^n)^{\lambda}= x^{p+m\lambda}y^{q+n\lambda}=x^{d b}y^{-d a},$$ 
so $g$ is a branch of $ x^{d b}y^{-d a}$ on $U_k$. 

\begin{proposition} \label{propsi} If $\varepsilon$ is small enough, then for each $k\in\{0,\dots, d-1\}$ there exists a function $\psi_k\in \mathcal{O}(U_k)$ which is invariant by $F$, i.e. $\psi_k\circ F=\psi_k$. Moreover, 
$\psi_k=ug$, where $g$ is the function defined above and $u\in \mathcal{O}(U_k)$ satisfies $ |u(x,y)-1|<1/2$ for all $(x,y)\in U_k$; in particular, $\psi_k(U_k)\subset\mathbb{C}^*$.
\end{proposition}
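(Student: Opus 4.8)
The plan is to construct $\psi_k$ as the limit $\psi_k=\lim_{j\to\infty}g\circ F^j$, taken uniformly on $U_k$. If this limit exists, invariance is immediate, since letting $j\to\infty$ in the identity $g\circ F^{j+1}=(g\circ F^j)\circ F$ only shifts the index; so the whole point is to show that the limit exists and has the required shape $ug$ with $|u-1|<1/2$. Note that $F(U_k)\subset U_k$ by Proposition~\ref{parabola}, so all the compositions $g\circ F^j$ are well defined holomorphic functions on $U_k$.

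First I would compute the multiplicative cocycle $g\circ F/g$. Writing $z=x^my^n$, so that $z^d=x^My^N$, and using $x_1=x\bigl(1+z^d(a+A)\bigr)$, $y_1=y\bigl(1+z^d(b+B)\bigr)$ together with the expansion $z_1=z-\tfrac1dz^{d+1}+z^{d+1}O(x,y)$ of \eqref{eq:x^my^n}, a direct expansion of $\dfrac{g\circ F}{g}=\dfrac{x_1^p}{x^p}\,\dfrac{y_1^q}{y^q}\,\Bigl(\dfrac{z_1}{z}\Bigr)^{\lambda}$ gives
\[
\frac{g\circ F}{g}=1+z^d\Bigl(pa+qb-\tfrac{\lambda}{d}\Bigr)+z^d\,R(x,y),
\]
where $R$ is holomorphic on $U_k$ with $R(x,y)=O(x,y)$ (here one uses $am+bn=-1/d$, i.e.\ the normalization $aM+bN=-1$, and $\ord A,\ord B\ge1$). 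The crucial point is that the coefficient $pa+qb-\lambda/d$ \emph{vanishes}, precisely because $\lambda=d(ap+bq)$; hence, setting $\phi:=g\circ F/g-1$, we have $\phi=z^dR$ with $R$ holomorphic on $U_k$ and vanishing at the origin. Invoking now the inequalities defining $U_k$ --- which force $|y|\le|z|^\gamma$, and $|x|\le|z|^\gamma$ when $n\ge1$, while $|z|=|x|$ is arbitrarily small when $n=0$ --- one obtains, after shrinking $\gamma$ if necessary so that $\gamma\le1$,
\[
|\phi(x,y)|\le C\,|x^my^n|^{\,d+\gamma}\qquad\text{on }U_k,
\]
with $C$ independent of $\varepsilon$ once $\varepsilon$ is below a fixed bound.

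Next I would use the Leau--Fatou bound \eqref{LFbounds0}, $|x_l^my_l^n|^d\le c\,|x^my^n|^d/(1+l|x^my^n|^d)$, to sum the previous estimate along orbits: since $\sum_{l\ge0}(1+lt)^{-(1+\gamma/d)}\le 1+\tfrac{d}{\gamma t}$, one gets
\[
\sum_{l=0}^{\infty}\bigl|\phi\bigl(F^l(x,y)\bigr)\bigr|\ \le\ C'\,|x^my^n|^{\gamma}\ \le\ C'\varepsilon^{\gamma/d}
\]
uniformly on $U_k$. For $\varepsilon$ small this sum is less than $\log(3/2)$, so the infinite product $u:=\prod_{l=0}^{\infty}\bigl(1+\phi\circ F^l\bigr)$ converges uniformly on $U_k$ to a function $u\in\mathcal O(U_k)$ with $|u-1|\le\exp\bigl(\sum_l|\phi\circ F^l|\bigr)-1<1/2$; in particular $u$ is nowhere zero. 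Since $g\circ F^j=g\cdot\prod_{l=0}^{j-1}(1+\phi\circ F^l)$, we conclude $g\circ F^j\to ug=:\psi_k$ uniformly on $U_k$, so $\psi_k\in\mathcal O(U_k)$, $\psi_k\circ F=\lim_j g\circ F^{j+1}=\psi_k$, and $\psi_k(U_k)\subset\C^*$ because $g(U_k)\subset\C^*$ by \eqref{eq:defg} and $u$ is nonvanishing.

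I expect the main obstacle to be the combination of the cocycle computation and the orbit summation. The cancellation of the $z^d$-coefficient is forced by the choice of $\lambda$, but by itself it only yields $|\phi|\lesssim|z|^d$, whose orbit sum diverges logarithmically (this is exactly why one must work on $U_k$ rather than on the larger invariant domains $D_k$ of Remark~\ref{rk:largerdomains}); the delicate step is to combine that algebraic cancellation with the size restrictions built into $U_k$ and with the decay rate supplied by Leau--Fatou, so as to gain the extra power $|z|^\gamma$ that makes the product defining $u$ converge and stay close to $1$.
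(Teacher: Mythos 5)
Your proposal is correct and follows essentially the same route as the paper: define $\psi_k=\lim_j g\circ F^j$, compute the cocycle $g\circ F/g=1+z^d[ap+bq-\lambda/d+O(x,y)]$ so that the choice $\lambda=d(ap+bq)$ kills the leading term, use the $U_k$ constraints $|x|,|y|\le|z|^\gamma$ to gain the extra factor $|z|^\gamma$, and sum via the Leau--Fatou decay \eqref{LFbounds0} to get a convergent product $u$ with $|u-1|<1/2$. The only difference is cosmetic: you make the closeness of $u$ to $1$ quantitative through the explicit bound $\sum_l|\phi\circ F^l|\le C'\varepsilon^{\gamma/d}$ and the elementary inequality $|\prod(1+a_l)-1|\le e^{\sum|a_l|}-1$, whereas the paper argues the same point more qualitatively via finite partial products.
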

\proof We define $\psi_k$ as 
$$\psi_k(x,y)=\lim\limits_{j\to\infty}g(x_j,y_j), \ \ (x,y)\in U_k,$$
where $(x_j,y_j)=F^j(x,y)$. It is clear that this function, if well defined,  will  be invariant for $F$.
Let us show that it is well defined and holomorphic.
Using the expression of $F$ and equation~\eqref{eq:x^my^n}, we have that
$$\dfrac{x_1^py_1^q}{x^py^q}=1+(x^my^n)^d[ap+bq+O(x,y)]$$
and
$$\dfrac{(x_1^my_1^n)^\lambda}{(x^my^n)^\lambda}=1-(x^my^n)^d\left[\frac\lambda d+O(x,y)\right]$$
for all $(x,y)$, so 
\begin{equation}\label{propro}
\frac{g(x_1,y_1)}{g(x,y)}=1+\ell(x,y), \ \ \text{with } \ell(x,y)=(x^my^n)^dO(x,y).
\end{equation}
Since $(x_j,y_j)\in U_k$ for all $j$, we have that $|x_j|,|y_j| \le |x_j^my_j^n|^\gamma$ (in case $n=0$ the bound for $|x_j|$ is immediate, since $\gamma<1$), so
$$|\ell(x_j,y_j)|\le K|x_j^my_j^n|^{d+\gamma}$$
for some $K>0$ and therefore, by equation~\eqref{LFbounds0}, the product $\prod_{j\ge0}\frac{g(x_{j+1},y_{j+1})}{g(x_j,y_j)}$ converges uniformly
 for  $(x,y)\in {U_k}$ and defines a holomorphic function $u\in\mathcal{O}({U_k})$. Then $g(x_j,y_j)\to u(x,y)g(x,y)$ uniformly on $U_k$, so $\psi_k$ is well defined and holomorphic in ${U_k}$, and we have $\psi_k=ug$.
Note that the function $u$ is arbitrarily close to $1$ if we suppose $|x^my^n|$ 
to be small enough: if $n\in\mathbb{N}$ is  large enough the finite product 
$\prod_{0\le j\le n}\frac{g(x_{j+1},y_{j+1})}{g(x_j,y_j)}$ is arbitrarily uniformly close to $u$ and 
we have from \eqref{propro} that this finite product is arbitrarily uniformly close to 1 if 
$|x^my^n|$ is small enough. Therefore, we can assume that $\varepsilon$ is small enough so that 
$|u(x,y)-1|<1/2$ for all $(x,y)\in U_k$.
\qed

\section{Approximate Fatou coordinates}\label{approximate}

Fix $\varepsilon, \theta$ small enough so that Propositions~\ref{parabola} and \ref{propsi} hold. As a first approximation to Fatou coordinates on $U_k=U_k(\varepsilon, \theta)$ (i.e. conjugations with $(z,w)\mapsto(z+1,w)$) we consider the map 
$\phi_k\colon U_k\to \mathbb{C}\times \mathbb{C}^*$ defined as 
$$\phi_k(x,y)=\left(\frac{1}{(x^my^n)^d},\psi_k(x,y)\right),$$
where $\psi_k\in\mathcal{O}(U_k)$ is the invariant function given by Proposition~\ref{propsi}.

For $0<r<1$, consider the set $V=V(\varepsilon, \theta, r)$ defined by
$$V=\left\{(z,w)\in\C^2:|z|>\varepsilon^{-1}, \ |\arg z|<\theta, \ |w|<r|z|^{\frac{-\re b}{m}-\frac{\gamma}{dm}}\right\}$$
in case $n=0$ and
$$V=\left\{(z,w)\in\C^2:|z|>\varepsilon^{-1}, \ |\arg z|<\theta, \ r^{-1}|z|^{\frac{\re a}{n}+\frac\gamma{dn}}<|w|<r|z|^{\frac{-\re b}{m}-\frac{\gamma}{dm}}\right\}$$
in case $n\ge1$. Notice that $V$ is homeomorphic to $\C^2$ if $n=0$ and to $\C\times\C^*$ if $n\ge1$.

\begin{remark}\label{rk:Vfundamentaldomain}
If an orbit $F^j(x,y)=(x_j,y_j)$ converges to 0, then we have by Proposition~\ref{parabola} that $(x_j,y_j)\in U_k$ for some $k$ and for all $j\ge j_0$, and it also holds that $\phi_k(x_j,y_j)\in V$  if $j$ is large enough: if we set $(z_j,w_j)=\phi_k(x_j,y_j)$ for all $j\ge j_0$, then clearly $|z_j|>\varepsilon$ and $|\arg z_j|<\theta$; moreover, since $\psi_k$ is invariant for $F$ we have that $w_j$ is a nonzero constant for all $j$ while $z_j\to +\infty$, so for $j$ large enough we get $|w_j|<r|z_j|^{\frac{-\re b}{m}-\frac{\gamma}{dm}}$ and, if $n\ge1$, $|w_j|>r^{-1}|z_j|^{\frac{\re a}{n}+\frac\gamma{dn}}$, so $(z_j,w_j)\in V$. 
\end{remark}

\begin{lemma}\label{lem:phibiholom}
If $r$ is sufficiently small, then $V\subset \phi_k(U_k)$ for every $k$ and $\phi_k:\phi_k^{-1}(V)\to V$ is a biholomorphism.
\end{lemma}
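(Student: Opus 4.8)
The plan is to show that, once $r$ is small enough (uniformly over the finitely many values of $k$), $\phi_k$ restricts to a holomorphic bijection $\phi_k^{-1}(V)\to V$; being a holomorphic bijection between open subsets of $\C^2$, it is then automatically a biholomorphism. First note that $\phi_k$ sends $U_k$ into $\{|z|>\varepsilon^{-1},\,|\arg z|<\theta\}\times\C$: the constraints defining $U_k$ give $|x^my^n|<\varepsilon^{1/d}$ with $x^my^n$ in the sector bisected by $e^{2\pi ik/d}\R^+$, so $z=(x^my^n)^{-d}$ satisfies the first two conditions of $V$, while the second coordinate of $\phi_k$ is $\psi_k\in\mathcal O(U_k)$. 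Fix $(z,w)\in V$. The equation $(x^my^n)^{-d}=z$, together with the sector condition, forces $x^my^n=s:=e^{2\pi ik/d}z^{-1/d}$ (principal root, legitimate since $|\arg z|<\theta<\pi/2$): two $d$-th roots of $z^{-1}$ lying in a sector of opening $2\theta/d<2\pi/d$ must coincide. Since $qm-pn=1$, every $(x,y)$ with $x^my^n=s$ is uniquely of the form $x=s^{q}t^{-n}$, $y=s^{-p}t^{m}$ with $t:=x^py^q\in\C^*$, and under this substitution the remaining constraints defining $U_k$ become $t\in A_s:=\{\,|s|^{(q-\gamma)/n}\le|t|\le|s|^{(p+\gamma)/m}\,\}$ (the lower bound is vacuous when $n=0$, where $m=1$, $p=0$, $q=1$, so in fact $x=s$ and $t=y$). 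Using $g(x,y)=x^py^q(x^my^n)^\lambda=t\,s^{\lambda}$ and $\psi_k=ug$, the equation $\psi_k(x,y)=w$ becomes the scalar equation
$$t\,u(s^{q}t^{-n},s^{-p}t^{m})\,s^{\lambda}=w,\qquad t\in A_s.$$

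Next I locate $t_{*}:=w\,s^{-\lambda}$ inside $A_s$. As $\arg s$ varies in a bounded interval on $U_k$, one has $|t_{*}|\asymp|w|\,|z|^{\,p\re a+q\re b}$ with constants independent of $(z,w)$. The point of the (at first sight peculiar) inequalities defining $V$ is that, using the normalization $aM+bN=-1$, i.e.\ $m\re a+n\re b=-1/d$, together with $qm-pn=1$, all powers of $|z|$ cancel and these inequalities force
$$|t_{*}|\le c\,r\,|s|^{(p+\gamma)/m}\qquad\text{and}\qquad|t_{*}|\ge c^{-1}r^{-1}\,|s|^{(q-\gamma)/n}$$
for a constant $c$ depending only on $k,\lambda,\theta,d$. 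Hence, for $r$ small, the closed annulus $\{\tfrac12|t_{*}|\le|t|\le 3|t_{*}|\}$ lies in the interior of $A_s$ (in particular $A_s\neq\emptyset$ whenever $V\neq\emptyset$).

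Now apply Rouché's theorem. For $t$ with $(s^{q}t^{-n},s^{-p}t^{m})\in U_k$ — i.e. $t\in A_s$ — set $f(t)=t\,u(s^{q}t^{-n},s^{-p}t^{m})\,s^{\lambda}-w$ and $f_{0}(t)=t\,s^{\lambda}-w$, whose only zero is the simple zero $t_{*}$. By Proposition~\ref{propsi}, $|u-1|<1/2$ on $U_k$, so $|f-f_{0}|=|t\,s^{\lambda}|\,|u-1|\le\tfrac12|t|\,|s^{\lambda}|$; a direct estimate gives $|f(t)|>0$ on every circle $\{|t|=\rho\}\subset A_s$ with $\rho\le\tfrac12|t_{*}|$ or $\rho\ge 3|t_{*}|$, and $|f-f_{0}|<|f_{0}|$ on $|t|=\tfrac12|t_{*}|$ and on $|t|=3|t_{*}|$. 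Applying the argument principle on the annulus $\{\tfrac12|t_{*}|\le|t|\le 3|t_{*}|\}$ and homotoping $f$ to $f_{0}$ along the two bounding circles, one sees that $f$ has there exactly one zero, necessarily simple; together with the non-vanishing of $f$ on the rest of $A_s$, $f$ has a unique zero $t\in A_s$. Then $x=s^{q}t^{-n}$, $y=s^{-p}t^{m}$ defines a point of $U_k$ with $\phi_k(x,y)=(z,w)$, proving $V\subset\phi_k(U_k)$; and if $(x,y),(x',y')\in U_k$ are both sent by $\phi_k$ to $(z,w)\in V$, the discussion above forces them to share the same $s$ and then the same $t$, hence to coincide, so $\phi_k$ is injective on $\phi_k^{-1}(V)$. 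This proves the lemma. (When $n=0$, $A_s$ is a disk and $t=y$ may vanish: if $w=0$ then $f(t)=t\,u(s,0)\,s^{\lambda}$ has the single simple zero $t=0$, and if $w\neq 0$ the argument above applies verbatim.)

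The step that requires care is the second one: one must verify that the inequalities cut out by $V$ are exactly those placing $t_{*}$ well inside the annulus $A_s$, and this hinges entirely on the cancellation of the powers of $|z|$, which in turn rests on the normalizations $aM+bN=-1$ and $qm-pn=1$. Once that is settled, existence and uniqueness of the preimage reduce to a soft application of Rouché using only $|u-1|<1/2$, with no control on derivatives of $u$ needed.
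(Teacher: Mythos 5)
Your proof is correct and follows essentially the same strategy as the paper's: fix $(z,w)\in V$, determine $x^my^n=s$ from the sector condition, observe that the slice $\{x^my^n=s\}\cap U_k$ is an annulus (or a disk when $n=0$) on which $g$ is an affine coordinate, locate the point with $g=w$ well inside it using the normalizations $aM+bN=-1$ and $qm-pn=1$, and then invoke Rouch\'e via $|\psi_k-g|<\tfrac12|g|$ to conclude that $\psi_k=w$ has a unique solution on that slice. The only difference is presentational: you make the parameterization $t=x^py^q$ of the annulus explicit and run Rouch\'e on a shrunken annulus around $t_*$, whereas the paper works directly with the Riemann surface $A_{z_0}\subset\C^2$ (using the explicit inverse of $\varphi=(1/(x^my^n)^d,\,g)$ to count the zero of $g-w_0$) and applies Rouch\'e on the full boundary $\partial A_{z_0}$.
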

\begin{proof}
Without loss of generality, we assume $k=0$. Let $g$ be the function defined by~\eqref{eq:defg}. Using the fact that $qm-pn=1$ and $adm+bdn=-1$, a straightforward computation shows that the map $\varphi:U_0\to\C^2$ given by 
$$\varphi(x,y)=\left(\frac{1}{(x^my^n)^d},g(x,y)\right)$$
is injective and its inverse is given by 
$$\varphi^{-1}(z,w)=\left(z^aw^{-n},z^bw^{m}\right).$$
Consider a point $(z_0,w_0)\in V$, and set
$$A_{z_0}=\left\{(x,y)\in\C^2: x^my^n=z_0^{-1/d}, \ |\epsilon x|<|z_0|^{-\gamma/d}, \ |y|<|z_0|^{-\gamma/d}\right\}.$$
Notice that $A_{z_0}\subset U_0$ and $\overline{A_{z_0}}$ is a Riemann surface  with boundary. If we set $x=z_0^aw_0^{-n}$ and $y=z_0^bw_0^{m}$, then $x^my^n=z_0^{-1/d}$ and $g(x,y)=w_0$. Moreover, $|y|=|z_0^bw_0^m|<r^m|z_0^b||z_0|^{-\re b-\gamma/d}\le r^me^{|b|\theta}|z_0|^{-\gamma/d}$ and, if $n\ge1$, analogously $|x|=|z_0^aw_0^{-n}|<r^ne^{|a|\theta}|z_0|^{-\gamma/d}$, so $(x,y)\in A_{z_0}$ if $r$ is small enough. Therefore, since $\varphi$ is injective, $g|_{A_{z_0}}$ assumes the value $w_0$ once.
If we show that 
\begin{equation}\label{eq:rouche}
|\psi_0(x,y)-g(x,y)|<|g(x,y)-w_0|\quad \textrm{whenever} \ (x,y)\in\partial A_{z_0}
\end{equation}
then, by Rouché's theorem, the function $\psi_0|_{A_{z_0}}$ will also assume the value $w_0$ exactly once, showing that $V\subset\phi_0(U_0)$ and that $\phi_0$ is injective in $\phi_0^{-1}(V)$. Let us prove that the inequality \eqref{eq:rouche} holds. 
In case $n\ge1$, the boundary $\partial A_{z_0}$ of $A_{z_0}$ is composed by two connected components, 
\begin{align*}
\partial_1 A_{z_0}&=\left\{(x,y)\in\C^2:x^my^n=z_0^{-1/d}, \ |x|=|z_0|^{-\gamma/d} \right\}\\
\partial_2 A_{z_0}&=\left\{(x,y)\in\C^2:x^my^n=z_0^{-1/d}, \ |y|=|z_0|^{-\gamma/d}\right\},
\end{align*}
whereas in case $n=0$ we have $\partial A_{z_0}=\partial_2 A_{z_0}$.
Consider a point $(x,y)\in \partial A_{z_0}\subset U_0$. Since $x^my^n=z_0^{-1/d}$,
it follows from the computation of $\varphi^{-1}$ that 
$$g(x,y)^n= 
x^{-1}z_0^a\quad\text{ and }\quad g(x,y)^m=yz_0^{-b}.$$ 
We suppose first that  $(x,y)\in \partial_2 A_{z_0}$. Then
$$|g(x,y)|= |y|^{\frac{1}{m}}|z_0^{-\frac bm}| 
= |z_0|^{-\frac{\gamma}{dm}}|z_0^{-\frac bm}|
\ge e^{-\frac{|b|\theta}m}|z_0|^{-\frac{\re b}m-\frac{\gamma}{dm}}$$
so $|g(x,y)|>e^{-\frac{|b|\theta}m}r^{-1} |w_0|$.
Then 
$$|g(x,y)-w_0|\ge |g(x,y)|-|w_0|>\Bigl(1-e^{\frac{|b|\theta}m}r\Bigr)|g(x,y)|\ge \frac{1}{2}|g(x,y)|$$
if $r$ is small enough. 
This relation, together with the fact that 
$$|\psi_0(x,y)-g(x,y)|<\frac12|g(x,y)|$$
for all $(x,y)\in U_0$, as was shown in Proposition~\ref{propsi}, implies \eqref{eq:rouche}. Analogously, if 
$(x,y)\in \partial_1 A_{z_0}$ (so $n\ge1$) then
$$|g(x,y)|=|x|^{-\frac{1}{n}}|z_0^{\frac an}|= |z_0|^{\frac{\gamma}{dn}}|z_0^{\frac{a}{n}}|
\le e^{\frac{|a|\theta}{n}}|z_0|^{\frac{\re a}{n}+\frac{\gamma}{dn}}$$
so $|g(x,y)|< e^{\frac{|a|\theta}{n}}r|w_0|$ and hence
$$|g(x,y)-w_0|\ge|w_0|- |g(x,y)|>\Bigl(e^{-\frac{|a|\theta}{n}}r^{-1}-1\Bigr)|g(x,y)|
\ge \frac{1}{2}|g(x,y)|$$
if $r$ is sufficiently small, which again implies~\eqref{eq:rouche}.
\end{proof}

By Lemma~\ref{lem:phibiholom}, if $r$ is small enough then $\phi_k^{-1}$ is well defined on $V=V(\varepsilon,\theta, r)$ and takes values in $U_k$, so
$$\tilde{F}= \phi_k\circ F\circ \phi_k^{-1}$$ 
is  well defined on $V$.  Since $\psi_k$ is invariant by $F$, we can express $\tilde{F}(z,w)=(f(z,w),w)$. Then, if we write
$\phi_k^{-1}(z,w)=(x,y)\in U_k$ and $F(x,y)=(x_1,y_1)$, we have from equation~\eqref{eq:x^my^n} that
$$f(z,w)=\frac{1}{(x_1^my_1^n)^d}=\frac{1}{(x^my^n)^d}+1+\tilde{h}(x,y),$$
where $\tilde{h}(x,y)=O(x,y)$. That is, 
$$f(z,w)=z+1+{h}(z,w),\quad \text{ where } \ {h}(z,w)=\tilde{h}(x,y).$$
Since $(x,y)\in U_k$, we have that $|x|,|y|\le|z|^{-\gamma/d}$, so there is a constant $K>0$ such that 
$$|h(z,w)|<K|z|^{-\gamma/d}$$
for all $(z,w)\in V$. In particular, if $\varepsilon$ is small enough we have that $|f(z,w)|\ge |z|+\frac 12$. Moreover, we have the following

\begin{lemma}\label{disco} 
If $\varepsilon$, $\theta$ and $r$ are sufficiently small then $\tilde F(V)\subset V$ and there exists a constant $K'>0$ such that
$$\left|\frac{\partial h}{\partial z} (z,w)\right|<K'|z|^{-1-\gamma/d}$$
for every $(z,w)\in V.$ 
\end{lemma}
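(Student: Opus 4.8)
The plan is to prove the two assertions in turn, deriving the derivative bound first and then using it (together with the already-established estimate $|h(z,w)|<K|z|^{-\gamma/d}$ and $|f(z,w)|\ge |z|+\tfrac12$) to check the invariance $\tilde F(V)\subset V$. For the derivative bound, I would exploit that $h(z,w)=\tilde h(x,y)$ with $(x,y)=\phi_k^{-1}(z,w)\in U_k$, and that $\tilde h=O(x,y)$ is holomorphic on a fixed neighborhood of the origin. The key point is that a point $(z,w)\in V$ has a definite distance to $\partial V$ in the $z$-variable of order $|z|$: indeed, a disc $\{|\zeta - z|<c|z|\}$ around $z$ (for a small fixed $c$ depending on $\theta$) still satisfies $|\zeta|>\varepsilon^{-1}$ and $|\arg\zeta|<\theta$, and the inequalities on $|w|$ are unaffected since $w$ is fixed; hence $(\zeta,w)\in V$ for all such $\zeta$ (shrinking $\theta$ slightly if needed). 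Then Cauchy's estimate on this disc gives
$$\left|\frac{\partial h}{\partial z}(z,w)\right|\le \frac{1}{c|z|}\sup_{|\zeta-z|<c|z|}|h(\zeta,w)|\le \frac{1}{c|z|}\cdot K\,(c'|z|)^{-\gamma/d}=K'|z|^{-1-\gamma/d}$$
for a suitable $K'$, using that $|\zeta|\ge (1-c)|z|$ on the disc so $|h(\zeta,w)|<K|\zeta|^{-\gamma/d}\le K(1-c)^{-\gamma/d}|z|^{-\gamma/d}$.

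For the invariance $\tilde F(V)\subset V$, write $\tilde F(z,w)=(f(z,w),w)$ with $f(z,w)=z+1+h(z,w)$, so the $w$-coordinate is preserved and only the first two coordinate inequalities of $V$ need checking. From $|f(z,w)|\ge|z|+\tfrac12>\varepsilon^{-1}$ the modulus condition is immediate. For the argument condition, I would argue as in Leau--Fatou: since $|h(z,w)|<K|z|^{-\gamma/d}$, writing $f(z,w)=z(1+1/z+h(z,w)/z)$ shows $\arg f(z,w)=\arg z + O(|z|^{-1})$, with the correction having the right sign (the dominant term $\mathrm{Im}(1/z)$ pushes $|\arg|$ back toward $0$ when $|\arg z|$ is near $\theta$), so $|\arg f(z,w)|<\theta$ provided $\varepsilon^{-1}$ is large enough, i.e. $\varepsilon$ small enough. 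For the remaining inequalities on $|w|$ in the case $n\ge1$: since $w$ is unchanged but $|z|$ increases to $|f(z,w)|\ge|z|+\tfrac12$, and the bounds defining $V$ are $r^{-1}|z|^{\re a/n+\gamma/(dn)}<|w|<r|z|^{-\re b/m-\gamma/(dm)}$ with $\re a,\re b<0$, I need $r|z|^{-\re b/m-\gamma/(dm)}$ to be nondecreasing in $|z|$ (true since $-\re b/m-\gamma/(dm)$ — wait, need $-\re b - \gamma/d>0$, which holds by the choice $\re b + \gamma/d<0$ made before Proposition \ref{parabola}) and $r^{-1}|z|^{\re a/n+\gamma/(dn)}$ nonincreasing (true since $\re a+\gamma/d<0$); hence once $(z,w)\in V$, the pair $(f(z,w),w)$ still satisfies these inequalities. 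The case $n=0$ is identical but only uses the upper bound on $|w|$.

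The main obstacle I expect is purely bookkeeping: making sure the shrinking of $\theta$ needed to have the Cauchy disc $\{|\zeta-z|<c|z|\}$ sit inside $V$ is compatible with all the earlier choices (Propositions \ref{parabola}, \ref{propsi}, Lemma \ref{lem:phibiholom}), and keeping track of how small $\varepsilon$, $\theta$, $r$ must be so that both the $\arg$-invariance and the $|w|$-band invariance hold simultaneously. None of this is deep — it is the same ``parabolic sector is forward-invariant under a perturbed translation'' argument as in Theorem \ref{the:flowerdim1}, transplanted to the $z$-coordinate — but the two-dimensional band conditions on $w$ require that the exponents $-\re b/m-\gamma/(dm)$ and $\re a/n+\gamma/(dn)$ have the signs forced by $\re a+\gamma/d<0$, $\re b+\gamma/d<0$, so I would make that dependence explicit rather than leave it implicit.
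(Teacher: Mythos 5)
Your approach to the derivative bound is essentially the paper's: find a disc of radius proportional to $|z|$ sitting inside $V$ and apply Cauchy's estimate, using the already-known bound $|h|<K|z|^{-\gamma/d}$. However, there is a flaw in your setup of that disc. You write that ``the inequalities on $|w|$ are unaffected since $w$ is fixed,'' but this is false: the band conditions defining $V$, namely $r^{-1}|z|^{\re a/n+\gamma/(dn)}<|w|<r|z|^{-\re b/m-\gamma/(dm)}$, depend on $|z|$. As $\zeta$ ranges over the disc $\{|\zeta-z|<c|z|\}$ we can have $|\zeta|$ as small as $(1-c)|z|$, so the upper bound $r|\zeta|^{-\re b/m-\gamma/(dm)}$ may drop below the value of $|w|$ even though $|w|<r|z|^{-\re b/m-\gamma/(dm)}$; symmetrically, the lower bound can rise above $|w|$. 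Shrinking $\theta$ alone, as you suggest, does not repair this: one also needs a margin in $r$ (and in $\varepsilon$). The paper handles it correctly by taking the base point in $V(\varepsilon/2,\theta/2,r/2)$ and choosing $\rho$ so that the disc of radius $\rho|z_0|$ around $z_0$ stays in $V(\varepsilon,\theta,r)$, checking all four inequalities including the two on $|w|$. Your argument becomes correct if you replace ``shrinking $\theta$ slightly if needed'' by ``shrinking $\varepsilon$, $\theta$ and $r$ slightly if needed'' and carry out that verification.

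For the invariance $\tilde F(V)\subset V$, your route differs from the paper's on the first two conditions. You re-derive the modulus and argument bounds by a Leau--Fatou-type estimate on $f(z,w)=z+1+h(z,w)$, whereas the paper observes directly that $(f(z,w),w)=\phi_k(F(\phi_k^{-1}(z,w)))\in\phi_k(U_k)$, and $\phi_k(U_k)$ is by construction contained in $\{|z|>\varepsilon^{-1},|\arg z|<\theta\}\times\C$; the invariance $F(U_k)\subset U_k$ was already established in Proposition~\ref{parabola}, so these two conditions are immediate. Your self-contained estimate is correct in substance (the dominant drift $\mathrm{Im}(1/z)$ is indeed inward near $|\arg z|=\theta$ and dominates the $O(|z|^{-1-\gamma/d})$ perturbation), but it duplicates work; the paper's use of the already-proved invariance of $U_k$ is shorter. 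Your treatment of the $|w|$-band conditions for invariance --- using $|f(z,w)|>|z|$ together with the signs of the exponents forced by $\re a+\gamma/d<0$ and $\re b+\gamma/d<0$ --- matches the paper exactly.
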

\begin{proof}
Consider a point $(z,w)\in V$. Since $(f(z,w),w)\in \phi_k(U_k)$, it is clear that $|f(z,w)|>\varepsilon^{-1}$ and $|\arg (f(z,w))|<\theta$ and, since 
$|w|<r|z|^{\frac{-\re b}{m}-\frac{\gamma}{dm}}$ and $|f(z,w)|>|z|$, we also have that 
$$|w|<r|f(z,w)|^{\frac{-\re b}{m}-\frac{\gamma}{dm}}.$$
Analogously, if $n\ge1$, since $|w|>r^{-1}|z|^{\frac{\re a}{n}+\frac\gamma{dn}}$ and $|f(z,w)|>|z|$, we have $|w|>r^{-1}|f(z,w)|^{\frac{\re a}{n}+\frac\gamma{dn}}$, so $\tilde F(z,w)\in V$. To prove the bound for $\frac{\partial h}{\partial z}$ let us first show that there exists $\rho>0$ such that if $(z_0,w_0)\in V\left(\varepsilon/2,\theta/2,r/2\right)$ and $|z-z_0|<\rho|z_0|$ then $(z,w_0)\in V(\varepsilon, \theta, r)$.
Consider $(z_0,w_0)\in V\left(\varepsilon/2,\theta/2,r/2\right)$ and assume that $|z-z_0|<\rho|z_0|$.  Then 
$$|z|>(1-\rho )|z_0|>(1-\rho)2\varepsilon^{-1},$$
so $|z|>\varepsilon^{-1}$ for $\rho$ sufficiently small.
Since $|z/z_0-1|<\rho$, we have $|\arg(z/z_0)|<\arcsin\rho$, so
$$|\arg z|\le |\arg z_0 |+\arcsin\rho<\frac\theta 2+\arcsin\rho,$$
hence $|\arg z|<\theta$ if $\rho$ is small enough.
Since $|z_0|<(1-\rho)^{-1}|z|$, it follows that
$$|w_0|<\frac r2|z_0|^{\frac{-\re b}{m}-\frac{\gamma}{dm}}<\frac r2\left[(1-\rho)^{-1}|z|\right]^{\frac{-\re b}{m}-\frac{\gamma}{dm}},$$
so $|w_0|<r|z|^{\frac{-\re b}{m}-\frac{\gamma}{dm}}$ if $\rho$ is small enough
and, if $n\ge1$, 
$$|w_0|>2r^{-1}|z_0|^{\frac{\re a}{n}+\frac\gamma{dn}}>2r^{-1}\left[(1-\rho)^{-1}|z|\right]^{\frac{\re a}{n}+\frac\gamma{dn}},$$
so $|w_0|>r^{-1}|z|^{\frac{\re a}{n}+\frac\gamma{dn}}$ if $\rho$ is small enough. Hence, $(z,w_0)\in V(\varepsilon, \theta, r)$ if $\rho$ is small enough. 
Take a point $(z_0,w_0)\in V\left(\varepsilon/2,\theta/2,r/2\right)$.  If $D\subset \mathbb{C}$ is
the disc of radius $\rho|z_0|$ centered at $z_0$, then $D\times\{w_0\}$ is contained in $V(\varepsilon, \theta, r)$, so the function
$$h_{w_0}\colon z\in D \mapsto h(z,w_0)$$ is well defined and 
$$|h_{w_0}(z)|<K|z|^{-\gamma/d}<K(1-\rho)^{-\gamma/d}|z_0|^{-\gamma/d}.$$ 
Thus, it follows from Cauchy's inequality that
\begin{equation*}
\left|\frac{\partial h}{\partial z}(z_0,w_0)\right|=\left|(h_{w_0})'(z_0)\right|\le K(1-\rho)^{-\gamma/d}|z_0|^{-\gamma/d}(\rho|z_0|)^{-1}=K'|z_0|^{-1-\gamma/d}.\qedhere
\end{equation*}
\end{proof}

\section{Existence of Fatou coordinates}
Consider $\varepsilon$, $\theta$ and $r$ small enough so that Lemma~\ref{disco} holds in $V=V(\varepsilon,\theta,r)$. In this section we construct a biholomorphism
 $\Phi\colon V\to W\subset\mathbb C^2$, with $W\subset\C\times\C^*$ if $n\ge1$, conjugating $\tilde{F}$ with 
the map $(z,w)\mapsto (z+1,w)$.  
Since $\tilde{F}\colon V\to V$ is written $\tilde{F}(z,w)=(f(z,w),w)$,  each function $f_{w}\colon z\mapsto f(z,w)$ maps the domain $V_w=\{z\in\C:(z,w)\in V\}$ into itself. Thus, we start considering $w\in\C$ fixed ($w\in\mathbb{C}^*$ if $n\ge1$) and we 
find a conjugation of $f_w$ with the map $z\mapsto z+1$ following the ideas in \cite[Lemma 10.10]{Mil}. From the definition of $V$
we have
$$V_w=\left\{z\in\mathbb{C}\colon |z|>R_w,\; |\arg(z)|<\theta \right\},$$
where 
$$R_w=\max \left\{\varepsilon^{-1},\; (r^{-1}|w|)^{-dm/(d\re b+\gamma)},
\; \epsilon(r|w|)^{dn/(d\re a+\gamma)}\right\}.$$

Take a base point $p\in V_w$. The conjugation of $f_w$ with $z\mapsto z+1$  will be constructed as the limit of the functions
$$\beta_j(z)= f^j_w(z)-f^{j}_w(p),\quad j\in\mathbb{N}.$$
In order to simplify the proof of the convergence of these functions we assume $p$ to be large enough so that
for all $z\in V_w$  the euclidean segment $[z,p]$ is contained in $V_w$, which is possible because $\theta<\pi/2$. Since $|f_w(p)|\ge |p|+1/2$, the sequence
$|f_w^j(p)|$ is increasing, hence the property above also holds for $f_w^j(p)$. In  particular, we 
have, for all $z\in V_w$ and all $j\in\mathbb{N}$,
$$[f_w^j(z),f_w^j(p)]\subset V_w.$$
Since $f_w(z)=z+1+h(z,w)$ and $\left|\frac{\partial h}{\partial z}(z,w)\right|<K'|z|^{-1-\gamma/d}$, it follows from the mean value inequality that, if $[z_1,z_2]\subset  V_w$, then
\begin{align*}
\left|\frac{f_w(z_1)-f_w(z_2)}{z_1-z_2}-1\right|=
\left|\frac{h(z_1,w)-h(z_2,w)}{z_1-z_2}\right|\le \max\limits_{z\in[z_1,z_2]}\frac{K'}{|z|^{1+\gamma/d}}.
\end{align*}
Since the angle between $z_1\mathbb{R}^+$ and  $z_2\mathbb{R}^+$ is bounded by $2\theta<\pi$, there is a constant $\tau>0$ depending only on $\theta$ such that $\min\{|z|:z\in[z_1,z_2]\}\ge \tau\min\{|z_1|,|z_2|\}$, so
$$\left|\frac{f_w(z_1)-f_w(z_2)}{z_1-z_2}-1\right|\le \frac{K'}{\left(\tau\min\{|z_1|,|z_2|\}   \right)^{1+\gamma/d}}.$$
In particular, setting $z_1=f_w^j(z)$ and $z_2=f_w^j(p)$ we obtain 
$$\left|\frac{\beta_{j+1}(z)}{\beta_j(z)}-1\right|=\left|\frac{f_w^{j+1}(z)-f_w^{j+1}(p)}{f_w^{j}(z)-f_w^{j}(p)}-1\right|
\le \frac{K'}{\left(\tau\min\{|f_w^{j}(z)|,|f_w^{j}(p)|\}\right)^{1+\gamma/d}}$$
for all $z\in V_w$ and $j\in\mathbb{N}$.
Therefore, since $|f_w^{j}(z)|\ge j/2$ and $|f_w^{j}(p)|\ge j/2$, we
obtain
$$\left|\frac{\beta_{j+1}(z)}{\beta_j(z)}-1\right|\le 
K'(2\tau^{-1})^{1+\gamma/d}j^{-1-\gamma/d}$$
for all $z\in V_w$ and $j\in\mathbb{N}$. 
This shows that the product $\prod \frac{\beta_{j+1}(z)}{\beta_j(z)}$ is uniformly convergent in $V_w$ and therefore $\beta_j$ converges uniformly to a function $\beta_w\in \mathcal{O}(V_w)$.
Let us  show that $\beta_w$ is one to one and  conjugates $f_w$ with  the map $z\mapsto z+1$. 
Since $f_w(z)=z+1+h(z,w)$ and $|h(z,w)|<K|z|^{-\gamma/d}$ we have 
$$\left| f_w^{j+1}(p)-f_w^j(p) -1\right|= \left| h(f_w^j(p),w)\right|\le \frac{K}{|f_w^j(p)|^{\gamma/d}}\to 0
\quad \textrm{as } j\to\infty.$$
Thus, since $\beta_j(f_w(z))=\beta_{j+1}(z)+f_w^{j+1}(p)-f_w^j(p)$, taking $j\to\infty$ we obtain
$$\beta_w(f_w(z))=\beta_w(z)+1,\quad z\in V_w.$$
Finally, since $\beta_j$ is injective for all $j$ and $\beta_w$ is not constant, we conclude that $\beta_w$ is  injective. 
 
Now, as in \cite[Lemma 10.11]{Mil}, we prove the following additional property of $\beta_w$:
\begin{align}\label{cuber}\bigcup_{j\in\mathbb{N}} \left(\beta_w(V_w)-j\right)=\mathbb{C}.
\end{align}
We show first that $\lim\limits_{z\to \infty}\frac{\beta_w(z)}{z}=1$. Since $\beta_j$ tends uniformly to $\beta_w$, for some $l\in\mathbb{N}$ we have that  $|\beta_w-\beta_l|$ is bounded, whence
$$|\beta_w-f_w^l|\le |\beta_w-\beta_l|+|\beta_l-f_w^l|$$ is bounded. Then, since $f_w(z)=z+1+h(z,w)$ and $|h(z,w)|<K|z|^{-\gamma/d}$, 
$$\lim\limits_{z\to \infty}\frac{\beta_w(z)}{z}=\lim\limits_{z\to \infty}\frac{f_w^l(z)}{z} =1.$$
Consider $\zeta\in\mathbb{C}$. In order to prove \eqref{cuber} we will show that for $j\in\mathbb{N}$ large enough the point $\zeta_j=\zeta+j$ belongs to $\beta_w(V_w)$. Since $V_w$ is essentially a sector
of opening $2\theta$, if we take a positive number $\rho<\sin \theta$, it is not difficult to see that,
for $j$ large enough,
the closed disc $D_j$ of radius $r_j = \rho | \zeta_j|$ centered at $\zeta_j$ is contained in $V_w$. By Rouché's theorem, if
$$|\beta_w(z)-z|<r_j$$
for all $z\in \partial D_j$, then $\zeta_j\in\beta_w(D_j)\subset\beta_w(V_w)$. Since $\beta_w(z)/z\to 1$ when $z\to\infty$, for $z$ large enough we have $|\beta_w(z)-z|<\tau|z|$, where $\tau>0$ is taken such that ${\tau}(1+\rho)<\rho$. Then, if $z\in \partial D_j$ and $j$ is large enough,
$$|\beta_w(z)-z|<\tau|z|\le\tau(|\zeta_j|+r_j)=\tau(1+\rho)|\zeta_j|<\rho|\zeta_j|=r_j$$
and \eqref{cuber} follows.

\strut
 
The function $\beta_w$ that we have constructed depends on the choice of the base point $p\in V_w$, but its derivative does not, as we can check from the definition of $\beta_j$:
$$(\beta_w)'(z)=\lim\limits_{j\to\infty}(f_w^j)'(z).$$
It is easy to see that the same choice of $p$ also works for any  $w'$ in a neighborhood of $w$ and the function $\beta_{w'}$ will depend holomorphically on $w'$. That is, $\beta_{w'}(z)$ is a holomorphic function of $(z,w')$.  Thus, we can find an open covering
$\mathbb{C}= \bigcup_{i\in I} W_i$ if $n=0$ or $\mathbb{C}^*= \bigcup_{i\in I} W_i$ if $n\ge1$ and, for each $i\in I$, a holomorphic function 
$$\beta_i(z,w),\quad \text{ for } z\in V_w,\; w\in W_i$$ 
such that, for each $w\in W_i $, the map $z\in V_w\mapsto \beta_i(z,w)\in\mathbb{C}$ is univalent and satisfies
$\beta_i(f(z,w),w)=\beta_i(z,w)+1.$ Moreover, from the observation above the partial derivative 
$\frac{\partial \beta_i}{\partial z}$ does not depend on $i\in I$, that is,
$$\frac{\partial \beta_i}{\partial z}(z,w)=\frac{\partial \beta_j}{\partial z}(z,w),\quad \text{ for } z\in V_w,\; w\in W_i\cap W_j,\;i,j\in I.$$
Therefore, if $W_i\cap W_j\neq \emptyset$, there is a function $g_{ij}\in\mathcal{O}(W_i\cap W_j)$
such that 
\begin{align}\label{torta}
\beta_j(z,w)-\beta_i(z,w)= g_{ij}(w), \quad \text{ for } z\in V_w,\; w\in W_i\cap W_j,
\end{align}
hence $g_{ij}+g_{jk}+g_{ki}=0$ on $W_i\cap W_j\cap W_k$, for $i,j,k\in I$.
Then, since the first Cousin problem can be solved in $\C$ and $\mathbb{C}^*$,  there exist  functions 
$g_i\in\mathcal{O}(W_i)$, $i\in I$, such that $g_{ij}=g_i-g_j$ on $W_i\cap W_j$ and it follows from
\eqref{torta} that 
$$\beta_j(z,w)+g_j(w)=\beta_i(z,w) +g_{i}(w), \quad\text{ for } z\in V_w,\; w\in W_i\cap W_j.$$
Therefore we can define a global function $\beta\in\mathcal{O}(V)$ by 
$$\beta(z,w)=\beta_i(z,w)+g_i(w),\quad\text{ for } z\in V_w,\;w\in W_i$$ and we can see that, for each $w\in \mathbb{C}^*$, the map $$z\in V_w\mapsto \beta(z,w)\in\mathbb{C}$$ is univalent and $\beta(f(z,w),w)=\beta(z,w)+1$ for every $(z,w)\in V$. Now it is easy to check that
the holomorphic function $$\Phi(z,w)= (\beta(z,w),w), \quad (z,w)\in V$$
is univalent and satisfies $\Phi\circ \tilde{F}(z,w)=\Phi(z,w)+(1,0)$ for every $(z,w)\in V$. Moreover, $W=\Phi(V)$ satisfies 
\begin{align}
\label{stabilo}\bigcup_{j\in\N} \left[W-(j,0)\right]=\C^2 \ \text { if } n=0;\quad  \bigcup_{j\in\N} \left[W-(j,0)\right]=\C\times\C^* \ \text { if } n\ge1.
\end{align}
To show this property, consider a point $(z_0,w_0)\in \C^2$, with $(z_0,w_0)\in\mathbb{C}\times \mathbb{C}^*$ if $n\ge1$. If $w_0\in W_i$, we have
$$\beta(z,w_0)=\beta_i(z,w_0)+g_i(w_0)=\beta_{w_0}(z)+g_i(w_0)$$
for all $z\in V_{w_0}.$ By \eqref{cuber} there exist $z\in V_{w_0}$ and $j\in\mathbb{N}$ such that $[z_0-g_i(w_0)]+j=\beta_{w_0}(z)$, thus
$$\Phi(z,w_0)=(\beta(z,w_0),w_0)=(z_0+j,w_0)$$ and therefore
$(z_0,w_0)\in W-(j,0)$. 

\section{The flower theorem}\label{secfatou}
In this section we complete the proof of Theorem~\ref{FatouFlower}. As we mentioned in Section~\ref{constructing}, we will do it first in the case where 
$F$ has the form \eqref{eq:cornerdiffeo} with $N\ge0$ and satisfying condition \eqref{eq:hipononcorner}, and at the end of the section we will deal with the case where $F$ is of the form \eqref{eq:noncornerdiffeo}.

Consider $\varepsilon, \delta, r>0$ and $\theta\in\left(0,\frac{\pi}{2}\right)$ small enough so that Proposition~\ref{parabola}, Remark~\ref{rk:largerdomains} and Lemma~\ref{disco} hold. Consider the set
$$\widetilde S(\varepsilon,\theta)\subset\C$$ 
defined in Section~\ref{constructing}, and let
$\widetilde S_k$ be, for $k\in\{0,\dots,d-1\}$, the connected component of $\widetilde S(\varepsilon,\theta)$ bisected by $e^{2\pi i k/d}\R^+$, which is a sectorial domain of opening $(\pi+2\theta)/d$. For $0<\delta'\le\delta$ and for each $k\in\{0,\dots,d-1\}$, let $\widetilde D_k=\widetilde D_k(\delta')$ be the set defined by 
$$\widetilde D_k=\left\{(x,y)\in\C^2:x^my^n\in \widetilde S_k, |\epsilon x|<\delta',|y|<\delta'\right\}.$$
Since $\gcd(m,n)=1$, it is easy to see that the sets $\widetilde D_k$ are connected.
\begin{lemma}\label{tati} If $\delta'$ is small enough, then for any point $p\in \widetilde D_k(\delta')$ there exists $j\ge 0$
such that $F^{j}(p)\in \phi_k^{-1}(V)$. 
\end{lemma}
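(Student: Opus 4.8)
The plan is to show that any orbit starting in $\widetilde D_k(\delta')$ eventually enters $U_k(\varepsilon,\theta)$, and then, once inside $U_k$, eventually lands in the set $\phi_k^{-1}(V)$. The first half follows from the flower-theorem machinery already in place. Writing $z = x^m y^n$, equation~\eqref{eq:x^my^n} says that $z$ transforms under $F$ exactly as in Leau--Fatou theorem (Theorem~\ref{the:flowerdim1}) up to higher-order error. Since $x^my^n \in \widetilde S_k$ by definition of $\widetilde D_k$, and $\widetilde S_k$ is (a component of) the enlarged petal $\widetilde S(\varepsilon,\theta)$ on which Theorem~\ref{the:flowerdim1} guarantees forward invariance and $z_j \in S_k$ for $j \ge C/|z|^d$, the $z$-coordinate of the orbit stays in $\widetilde S_k$ and eventually enters the smaller sector $S_k$; in particular $|z_j| < \varepsilon^{1/d}$ and $|\arg(z_j) - 2\pi k/d| < \theta/d$ for $j$ large. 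For this to go through I must first check that the full orbit $F^j(p)$ stays in the domain where $F$ and expansion~\eqref{eq:x^my^n} are valid and where the $z$-dynamics is controlled; this is where $\delta'$ small enters. The natural statement is that $\widetilde D_k(\delta')$ is forward-invariant for $\delta'$ small — this is precisely the content of Remark~\ref{rk:largerdomains} for the domains $D_k$, whose proof (based on $\re a, \re b < 0$ giving $|y_1|\le|y|(1-\rho|z|^d)$ and, if $n\ge1$, $|x_1|\le|x|(1-\rho|z|^d)$) adapts verbatim once one observes that $\widetilde D_k$ differs from $D_k$ only by replacing the sector $S(\varepsilon,\theta)$ in the $z$-variable by the enlarged petal $\widetilde S(\varepsilon,\theta)$, which is still forward-invariant by Theorem~\ref{the:flowerdim1}. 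So the first step is: for $\delta'$ small, $F(\widetilde D_k(\delta'))\subset \widetilde D_k(\delta')$ and $F^j(p)\to 0$.

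Once forward-invariance and convergence to $0$ are in hand, Proposition~\ref{parabola} (the second assertion: any orbit converging to $0$ eventually lies in $U_k$ for some $k'$) gives that $F^j(p) \in U_{k'}(\varepsilon,\theta)$ for all large $j$ and some $k'$. One needs $k' = k$: this is forced because the orbit's $z$-coordinate stays in $\widetilde S_k$, hence eventually in the $k$-th sector of $S(\varepsilon,\theta)$, which is exactly the angular sector defining $U_k$; so the index is pinned down by the bisecting ray $e^{2\pi i k/d}\mathbb R^+$. Alternatively, one can run the $U_k$-capture argument of Proposition~\ref{parabola} directly inside $\widetilde D_k$, using inequality~\eqref{eq:boundeta} together with $\sum_l |x_l^my_l^n|^d = +\infty$ (which holds because $\lim_j j(x_j^my_j^n)^d = 1$, again from Theorem~\ref{the:flowerdim1}) to conclude $|y_j|/|z_j|^\gamma \to 0$ and, if $n\ge1$, $|x_j|/|z_j|^\gamma \to 0$, so $F^j(p)\in U_k$ for $j$ large.

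It remains to pass from $U_k$ to $\phi_k^{-1}(V)$, and this is exactly Remark~\ref{rk:Vfundamentaldomain}: given that $(x_j,y_j)\in U_k$ for all $j\ge j_0$ and that this orbit converges to $0$, setting $(z_j,w_j)=\phi_k(x_j,y_j)$ one has $|z_j| = |x_j^m y_j^n|^{-d} \to +\infty$ while $w_j = \psi_k(x_j,y_j)$ is a nonzero constant (since $\psi_k$ is $F$-invariant by Proposition~\ref{propsi}), and $|\arg z_j| < \theta$; hence for $j$ large the inequalities $|w_j| < r|z_j|^{-\re b/m - \gamma/(dm)}$ and, if $n\ge1$, $|w_j| > r^{-1}|z_j|^{\re a/n + \gamma/(dn)}$ both hold, so $(z_j,w_j)\in V$, i.e. $F^j(p) = \phi_k^{-1}(z_j,w_j) \in \phi_k^{-1}(V)$. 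The main obstacle is the very first step — verifying that $\delta'$ can be chosen small enough that $\widetilde D_k(\delta')$ is genuinely forward-invariant and the $z$-dynamics stays petal-confined — but since $\widetilde D_k$ sits inside the union of the $D_k$-type domains of Remark~\ref{rk:largerdomains} enlarged only in the already-controlled $z$-variable, this reduces to a routine reprise of arguments already carried out; no essentially new estimate is needed.
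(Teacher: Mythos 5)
Your overall plan — push the orbit from $\widetilde D_k(\delta')$ into $D_k(\varepsilon,\theta,\delta)$, then let Remark~\ref{rk:largerdomains} carry it into $U_k$ and Remark~\ref{rk:Vfundamentaldomain} carry it into $\phi_k^{-1}(V)$ — matches the structure of the paper's proof, and the second and third stages of your argument are correct. The gap is in the first stage, which is the only place a new estimate is actually needed. You assert that $\widetilde D_k(\delta')$ is forward-invariant, citing Remark~\ref{rk:largerdomains} and claiming its argument ``adapts verbatim.'' It does not. The contraction estimate $|y_1|\le|y|\bigl(1-\rho|x^my^n|^d\bigr)$ there uses that $(x,y)\in D_k$, i.e.\ $|\arg(x^my^n)-2\pi k/d|<\theta/d$, which together with $\re b<0$ forces $\re\bigl(b(x^my^n)^d(1+O)\bigr)<0$. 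On $\widetilde D_k$, by contrast, $\arg\bigl((x^my^n)^d\bigr)$ ranges up to $\pm(\pi/2+\theta)$, so $\re\bigl(b(x^my^n)^d\bigr)$ can be positive and one only gets $|y_1|\le|y|\bigl(1+\rho|x^my^n|^d\bigr)$: the $y$-coordinate can grow. Forward-invariance of the petal $\widetilde S_k$ under the one-dimensional dynamics of $z=x^my^n$ (Theorem~\ref{the:flowerdim1}) controls $z$ only, and says nothing about $|y|$ (nor $|x|$), which is the extra two-dimensional data the lemma has to manage.

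The paper's actual proof does not establish forward-invariance of $\widetilde D_k(\delta')$; instead it accepts the growth of $|y_j|$ and bounds the total accumulated factor. Let $j_0\approx C/|x^my^n|^d$ be the number of steps after which, by Theorem~\ref{the:flowerdim1}, the $z$-coordinate has entered the narrow sector $S_k$. Over those $j_0$ steps one has $|y_{j}|\le|y|\prod_{l<j_0}\bigl(1+\rho|x_l^my_l^n|^d\bigr)\le|y|\bigl(1+\rho c|x^my^n|^d\bigr)^{j_0}$ (using $|x_l^my_l^n|^d\le c|x^my^n|^d$ on the petal), and the key observation is that this quantity is bounded uniformly in the starting point by $K=\sup_{t\in(0,\varepsilon)}(1+\rho c t^d)^{C/t^d+1}<\infty$, because $j_0\sim t^{-d}$ while the base is $1+O(t^d)$. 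Choosing $\delta'\le\delta/K$ then guarantees $|y_j|<\delta$ (and $|x_j|<\delta$ if $n\ge1$) for $j\le j_0$, so $F^{j_0}(p)\in D_k(\varepsilon,\theta,\delta)$, after which your remaining steps apply. This uniform growth bound is the essential content of the lemma; your proposal, by mistaking the relevant estimate for a contraction, skips it.
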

\begin{proof}
Without loss of generality, we assume $k=0$. By Remarks~\ref{rk:largerdomains} and \ref{rk:Vfundamentaldomain}, it is enough to prove that if $\delta'>0$ is small enough then for any $(x,y)\in\widetilde D_0(\delta')$  there exists $j\in\N$ such that $(x_j,y_j)=F^j(x,y)\in D_0(\varepsilon,\theta,\delta)$. From equation~\eqref{eq:x^my^n},
arguing as in Theorem~\ref{the:flowerdim1} we find constants
$c>1$ and $C>0$, depending only on $\varepsilon$ and $\theta$, such that  if $(x,y)$ satisfies that $x^my^n\in \widetilde S_0$ and $|\epsilon x_j|,|y_j|<\delta$ for all $j$  then  $x_j^my_j^n\in\widetilde S_0$ and $|x_j^my_j^n|^d\le c|x^my^n|^d$ for all 
 $j$ and $|\arg(x_j^my_j^n)|<\theta/d$ for every $j\ge C/|x^my^n|^{d}$. Notice that we can find a constant $\rho>0$ such that if $x,y\in\C^*$ are small enough then
$$|y_1|=|y|\left|1+b(x^my^n)^d+(x^my^n)^dO(x,y)\right|\le |y|\left(1+\rho|x^my^n|^d\right).$$
Set $K=\sup_{t\in(0,\varepsilon)}(1+\rho ct^d)^{C/t^d+1}$ and consider $\delta'\le \delta/K$. Take $(x,y)\in \widetilde D_0(\delta')$ and set $j_0=\lceil C/|x^my^n|^{d}\rceil$.
We have that, for all $j\le j_0$,
$$|y_j|\le |y|\prod_{l=0}^{j_0-1}\left(1+\rho|x_l^my_l^n|^d\right)\le |y|\left(1+\rho c|x^my^n|^d\right)^{j_0},$$
so $|y_j|<\delta' K\le\delta$ for all $j\le j_0$. If $n\ge1$, analogously $|x_j|\le\delta$ for all $j\le j_0$. Since we also have $|\arg(x_j^my_j^n)|<\theta/d$, it follows that $(x_{j_0},y_{j_0})\in D_0(\varepsilon,\theta,\delta)$.
\end{proof}
We can now complete the proof of Theorem~\ref{FatouFlower}. Consider $\delta'>0$ such that Lemma~\ref{tati} holds. For each $k\in\{0,\dots, d-1\}$, we define
$$\Omega^+_k=\bigcup_{j\ge0}F^j\left(\widetilde D_k(\delta')\right).$$ We see that $\Omega^+_k$ is
connected, invariant by $F$ and attracted to $U_k$, by Lemma~\ref{tati}.
Clearly $\widetilde D_k(\delta')\subset \Omega^+_k$, and from the proof of Lemma~\ref{tati} we also have that
$$\Omega^+_k\subset\widetilde D_k(K\delta').$$
It is easy to see that the diffeomorphism $F^{-1}$ is also of the form
\eqref{eq:cornerdiffeo}, with the same 
pair $(M,N)$ and $(-a,-b)$ instead of $(a,b)$. Thus, if we work with  $F^{-1}$
instead of $F$, the previous construction allows us to find connected open sets $\Omega^-_0, \dots ,\Omega^-_{d-1}$ where each $\Omega^-_k$ is defined by 
$$\Omega^-_k=\bigcup_{j\ge0}F^{-j}\left(\widetilde D^-_k(\delta')\right),$$
with
$$\widetilde D^-_k(\delta')=\left\{(x,y)\in\C^2:x^my^n\in \widetilde S^-_k, |\epsilon x|<\delta',|y|<\delta'\right\},$$
being $\widetilde S^-_k$, $0\le k\le d-1$, the connected components of $\widetilde S^-(\varepsilon,\theta)$ (see Remark~\ref{rk:repelling}). In each $\Omega^-_k$, we have $F^{-j}\to0$ and $D_k^-(\delta')\subset\Omega^-_k\subset D^-_k(K\delta')$.

Since the opening of the components of $\widetilde S(\varepsilon,\theta)$ and $\widetilde S^-(\varepsilon,\theta)$ is greater than $\pi/d$, it is clear that  the domains $\Omega^+_0,\dots,\Omega^+_{d-1},\Omega^-_0,\dots,\Omega^-_{d-1}$, together with the fixed point set  $\{xy^\epsilon=0\}$, cover the open set
$$\left\{(x,y)\in\C^2:|x^my^n|<\varepsilon^{1/d}, |\epsilon x|<\delta', |y|<\delta'\right\},$$ 
so assertions \ref{item:parabolicdomains} and \ref{item:coverneigh} of Theorem~\ref{FatouFlower} are proved. 

For each $k$, set $\varphi^+_k=\Phi\circ\phi_k:\phi_k^{-1}(V)\to \mathbb C^2$, which is a univalent holomorphic map conjugating $F$ with the map $(z,w)\mapsto (z+1,w)$. It is straightforward to extend $\varphi^+_k$ as a biholomorphism 
$$\varphi^+_k\colon \Omega^+_k\to W_k^+\subset\C^2,$$
with $ W_k^+\subset\C\times\C^*$ if $n\ge1$, defining, for each $p\in\Omega_k^+$,  
$$\varphi^+_k(p)= \varphi^+_k(F^j(p))-(j,0)$$
for any $j\ge 0$  such that $F^j(p)\in \phi_k^{-1}(V)$. This shows assertion~\ref{tercero1+} of Theorem~\ref{FatouFlower} for the domains $\Omega^+_0,\dots,\Omega^+_{d-1}$; property~\ref{tercero1.5+} follows from \eqref{stabilo}. We can proceed analogously with the sets $\Omega^-_0,\dots,\Omega^-_{d-1}$ and this finishes the proof of Theorem~\ref{FatouFlower} for $F$ of the form \eqref{eq:cornerdiffeo} with $N\ge 0$.

Suppose now that $F$ is of the form \eqref{eq:noncornerdiffeo} satisfying \eqref{eq:hipononcorner}. As we saw in Section~\ref{constructing}, after the sectorial change of coordinates~\eqref{sectorialchange} in $\widetilde S_k(\varepsilon,\theta)\times \C$ we can write
$$F(x,y)=\left(x+x^{M+1}\left[a+O_1(x,y)\right], y+x^My\left[b+O_1(x,y)\right]\right).$$
The key point to note is that all the constructions we made in the previous sections to obtain the invariant sets $\Omega^+_k$ for a map $F$ of the form
\eqref{eq:cornerdiffeo} with $N=0$ were performed in $\widetilde S_k(\varepsilon,\theta)\times \C$, and all the calculations involved work analogously if we have $O_1(x,y)$ instead of $O(x,y)$. So the domains $\Omega^+_k$ can be defined in the same way, but in sectorial coordinates depending on $k$, and the same holds for $\Omega_k^-$. Assertions \ref{item:parabolicdomains} and \ref{tercero+} of Theorem~\ref{FatouFlower}, since referred to a fixed $k\in\{0,\dots, M-1\}$, follow exactly as above if we work in the corresponding sectorial coordinates.  For the proof of assertion \ref{item:coverneigh}, it is enough to show that each $\Omega^+_k$ contains a set of the form $\{(x,y)\in\C^2:x\in\widetilde S_k,|y|<\delta''\}$ in the original coordinates, for some $\delta''>0$, and the same for each $\Omega_k^-$.  In the original coordinates, $\Omega^+_k$ is actually given by a set of the form $\sigma_k(\Omega^+_k)$,
where $\sigma_k(x,y)=(x,y+u_k(x))$ is the inverse of the change of coordinates \eqref{sectorialchange}. Since $\Omega^+_k$ contains $\{(x,y)\in\C^2:x\in\widetilde S_k,|y|<\delta'\}$ and $|u_k(x)|\le K|x\log x|$ for some $K>0$, up to reducing $\varepsilon$ if necessary it is easy to see that $\sigma_k(\Omega^+_k)$ contains the set $\{(x,y)\in\C^2: x\in\widetilde S_k,|y|<\delta''\}$ for some $\delta''>0$.  Clearly the analogous property holds for each $\Omega_k^-$, so Theorem~\ref{FatouFlower} is proved.

\section{Proof of Theorem~\ref{saddlebehavior}}
Suppose first that $F$ is of the form \eqref{eq:cornerdiffeo}. As in Section~\ref{constructing}, up to a linear change of coordinates we can write $F$ as
$$F(x,y)=\left(x+x^{M+1}y^N\left[a+O(x,y)\right], y+x^My^{N+1}\left[b+O(x,y)\right]\right),$$
with $M\ge1$, $N\ge 1$ and $aM+bN=-1$, so the hypothesis of Theorem~\ref{saddlebehavior} means that either $\re a>0$ or $\re b>0$. We assume without loss of generality that $\re b>0$. As in the previous sections, put $d=\gcd(M,N)$ and set $m=M/d$, $n=N/d$. As in Leau-Fatou flower theorem, from equation~\eqref{eq:x^my^n} we have that there exists $\delta>0$ such that for $\varepsilon$ small enough we have that the orbit $(x_j,y_j)$ of any point
$$(x,y)\in \mathcal{U}=\left\{(x,y)\in\C^2: |x^my^n|<\varepsilon, |x|<\delta, |y|<\delta\right\}$$ 
either leaves $\mathcal U$ or satisfies $(x_j^my_j^n)^d\to 0$ along $\R^+$ and $\lim_{j\to\infty}j(x_j^my_j^n)^d=1$.
Consider a point $(x,y)\in\mathcal{U}$ outside the fixed set $\{xy=0\}$ and let us show that there exists $j\in\N$ such that $(x_j,y_j)\not\in\mathcal{U}$. Assume by contradiction that $(x_j,y_j)\in\mathcal{U}$ for all $j$, so $(x_j^my_j^n)^d\to 0$
along $\R^+$ and $\lim_{j\to\infty}j(x_j^my_j^n)^d=1$. 
Thus, since $\re b>0$, for some $j_0\ge0$ and some $\nu>0$ we have that 
$|y_{j+1}|\ge |y_j|\left(1+\nu |x_j^my_j^n|^d\right)$
for all $j\ge j_0$ and therefore
$$|y_j|\ge |y_{j_0}|\prod_{l=j_0}^{j-1}\left(1+\nu |x_l^my_l^n|^d\right)$$
for all $j\ge j_0+1$, whence, since
$\lim_{j\to\infty}j(x_j^my_j^n)^d=1$, we conclude that $|y_j|\to\infty$, which is a contradiction. In the same way, up to reducing $\delta$ and $\varepsilon$,  we can prove that the negative orbit of 
any $(x,y)\in\mathcal{U}\backslash \{xy=0\}$ leaves $\mathcal{U}$.

Suppose now that $F$ is of the form \eqref{eq:noncornerdiffeo}.  Again as in Section~\ref{constructing}, up to a linear change of coordinates we have 
\begin{align*}
F(x,y)=\left(x+x^{M+1}\left[-1/M+O(x,y)\right], y+x^M\left[by+O(x,y^2)\right]\right),
\end{align*} where $\re b>0$. As above, from the equation 
$x_1=x+x^{M+1}\left[-1/M+O(x,y)\right]$ we conclude that there exists 
$\delta>0$ such that for $\varepsilon$ small enough the orbit $(x_j,y_j)$ of any point
$$(x,y)\in \mathcal{U}=\left\{(x,y)\in\C^2: |x|<\varepsilon, |y|<\delta\right\}$$ 
either leaves $\mathcal U$ or satisfies $x_j^M\to 0$ along $\R^+$ and $\lim_{j\to\infty}jx_j^M=1$.
Consider a point $(x,y)\in\mathcal{U}$ outside the fixed set $\{x=0\}$ and suppose
that $(x_j,y_j)\in\mathcal{U}$ for all $j\in\mathbb N$, so $x_j^M\to 0$
along $\R^+$ and $\lim_{j\to\infty}jx_j^M=1$. In particular,
$(x_j,y_j)\in\widetilde S_k(\varepsilon,\pi/4)\times \mathbb{C}$ for some $k\in\{0,\dots, M-1\}$ and for $j$ large enough.
As in Section \ref{constructing}, if we consider the sectorial coordinates
$(x,z)$ in $\widetilde S_k(\varepsilon,\pi/4)\times \mathbb{C}$, where $z=y-u_k(x)$, 
we can write
$$F(x,z)=\left(x+x^{M+1}\left[a+O_1(x,z)\right], z+x^Mz\left[b+O_1(x,z)\right]\right).$$
Since $x_j^M\to 0$ along $\R^+$ and $\re b>0$, for some $j_0\ge0$ and some $\nu>0$ we have that 
$|z_{j+1}|\ge |z_j|\left(1+\nu |x_j|^M\right)$
for all $j\ge j_0$ and therefore
$$|z_j|\ge |z_{j_0}|\prod_{l=j_0}^{j-1}\left(1+\nu |x_l|^M\right)$$
for all $j\ge j_0+1$. Then, if $z_{j_0}\neq 0$ we have that $|z_j|\to\infty$, which is a contradiction. Hence $z_{j_0}=0$, which means that the orbit of
$(x,y)$ eventually enters the parabolic curve $z=0$ of $F$ in the sectorial domain $\widetilde S_k(\varepsilon,\pi/4)\times \mathbb{C}$. For each $k\in\{0,\dots,M-1\}$, let    
$\mathcal{P}_k$ be the set of points in $\mathcal{U}$ that attracted by $F|_{\mathcal U}$ into the parabolic curve of $F$ in $\widetilde S_k(\varepsilon,\pi/4)\times \mathbb{C}$; it is not difficult to see that $\mathcal P_k$ is a one-dimensional complex submanifold of $\mathcal U$, and we have shown that every point in $\mathcal U$ outside the fixed  set $\{x=0\}$ and outside the manifold $\mathcal P^+ =\mathcal P_0 \cup\dots \cup \mathcal P_{d-1}$ has a finite positive orbit in $\mathcal U$. In the same way, up to reducing $\delta$ and $\varepsilon$, if $\mathcal P_k^-$ is the one-dimensional complex submanifold of $\mathcal U$ of points attracted by $F^{-1}|_{\mathcal U}$ into the parabolic curve of $F^{-1}$ in
the sectorial domain $\widetilde S_k^-(\varepsilon, \pi/4)\times \mathbb{C}$, then every point in $\mathcal U$ outside the fixed set $\{x=0\}$ and outside the manifold $\mathcal P^-=\mathcal P_0^- \cup\dots \cup \mathcal P_{d-1}^-$  has a finite negative orbit in $\mathcal U$. This ends the proof of Theorem~\ref{saddlebehavior}.

\section{Appendix: Resolution theorem for biholomorphisms}
The resolution theorem for two-dimensional biholomorphisms stated in the introduction is valid not only for tangent to the identity biholomorphisms, but more generally for unipotent biholomorphismsm, and is based on the corresponding theorem for vector fields and foliations in $\C^2$. A formal vector field $X$ in $(\mathbb{C}^2,0)$ can be written, in a unique way up to multiplication by a unit, as $X=f(A\partial_x+B\partial_y),$ where $f,A,B\in\C[[x,y]]$ and $A$ and $B$ have no common factor. The vector field $\sat X=A\partial_x+B\partial_y$ is called the saturation of $X$. If $f$ is not a unit, we say that $\sing X=\sqrt{(f)}$ is the singular locus of $X$; if $f$ is a unit, we say that $X$ is saturated and define $\sing X=\{0\}$ if $A$ and $B$ are not units and $\sing X=\emptyset$ otherwise. 
An irreducible formal curve $(g)$ in $(\mathbb{C}^2,0)$  is said to be a separatrix of a formal vector field $X$ if $X(g)\in (g)$. If $X$ is not singular, its formal integral curve through the origin is its only separatrix. The branches of the singular locus of $X$ are separatrices, which are called fixed.  

We say that a saturated singular vector field $X$ in $(\mathbb{C}^2,0)$ is reduced if the eigenvalues $\lambda_{1},\lambda_{2}$ of its linear part satisfy $\lambda_{1}\neq 0$ and $\lambda_{2}/\lambda_{1}\not\in\Q_{>0}$; if $\lambda_2\neq 0$ we say that $X$ is {non-degenerate}, otherwise $X$ is called a {saddle-node}. A reduced vector field $X$ has exactly two formal separatrices, which are non-singular and transverse, and each one is tangent to an eigenspace of the linear part of $X$. Let $X$ be a singular formal vector field in $(\mathbb{C}^2,0)$.
The resolution theorem for vector fields, due to Seidenberg \cite{Sei}, asserts  that there exist a finite composition of blow-ups 
$\pi\colon (M,E)\to(\mathbb{C}^2,0)$, a formal vector field $\tilde{X}$ along $E$ with $\pi_*\tilde{X}=X$ and finitely many points  $p_1,\dots, p_k\in E$ such that $\sat\tilde X_{p_1},\dots,\sat\tilde X_{p_k}$ are reduced and $\sat\tilde X_p$ is not singular for any  $p\in E\backslash\{p_1,\dots,p_k\}$ (throughout this section, if $\mathfrak g$ is any analytic or formal object, we denote by $\mathfrak g_p$ its germ at the point $p$). The set $E$, called the exceptional divisor, is a finite union of smooth rational curves with normal crossings; we say that a point in $E$ is a corner if it is the intersection of two components of $D$. Up to composing $\pi$ with some additional blow-ups, we can assume that the family of separatrices of $X$, even if it is infinite, is desingularized by $\pi$: 
\begin{enumerate}[(a)]
\item\label{aaa}  the strict transform of each separatrix is a  non-singular curve at a non-corner point of $E$ and is transverse to $E$; 
\item\label{bbb} the strict transforms of different separatrices are curves at different points of $E$.
\end{enumerate}
Any map $\pi$ as above is called a {resolution} of $X$ and we also say that $\tilde X$ is a resolution of $X$.  Any further blow-up at a singular point 
of $\tilde X$ in $E$ gives another resolution of $X$. As may be expected, there exists a unique {minimal resolution} of $X$ in the sense that any other resolution is obtained from the minimal one by performing finitely many additional blow-ups. If $X$ is a nilpotent vector field in $(\mathbb{C}^2,0)$ (i.e. its linear part is nilpotent) and $\tilde X$ is a resolution of $X$, then $\tilde X_p$ is nilpotent for any point $p\in E$. In particular, if $X$ is nilpotent then the singular points of $\tilde X$ in $E$ are not isolated: an isolated singularity $p$ of $\tilde X$ would be saturated and reduced, so $\tilde X_p$ would not be nilpotent. Therefore the set $\sing_E\tilde X$ of singular points of $\tilde X$ in $E$ is a union of components of $E$. We define the singular locus $\sing\tilde X$ of $\tilde X$ as the union of $\sing_E\tilde X$ with the strict transform of $\sing X$ by $\pi$. It is easy to see that $(\sing\tilde X)_p=\sing \tilde X_p$ for all $p\in E$. Observe that $E\cup\sing\tilde X$ is the transform of $\sing X$, so it is a finite union of smooth curves with normal crossings, and so is $\sing\tilde X$.
\begin{lemma}\label{redutan}Let $\tilde X$ be a resolution of $X$ and let $p\in E$ such that $\sat\tilde{X}_p$ is singular (hence a reduced singularity). Then each branch of $(E\cup\sing \tilde X)_p$ is one of the two separatrices of $\sat\tilde{X}_p$.
\end{lemma}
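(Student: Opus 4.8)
The plan is to reduce the whole statement to showing that every branch of the germ $(E\cup\sing\tilde X)_p$ is invariant by $\sat\tilde X_p$, hence is one of its separatrices. This suffices: $E\cup\sing\tilde X$ is a normal crossings divisor, so $(E\cup\sing\tilde X)_p$ has at most two branches, all of them non-singular and pairwise transverse; and $\sat\tilde X_p$, being reduced, has exactly two formal separatrices, which are non-singular and transverse; so each branch, being a separatrix, must coincide with one of these two.

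To prove the invariance, recall that $E\cup\sing\tilde X$ is the transform of $\sing X$ and that $\sing\tilde X$ is the union of $\sing_E\tilde X$ (a union of components of $E$) with the strict transform of $\sing X$; hence each branch of $(E\cup\sing\tilde X)_p$ is either a component of $E$ through $p$ or a branch through $p$ of the strict transform of $\sing X$. If the branch is a component $D$ of $E$, then $D$ cannot be dicritical, since in a resolution the dicritical components of $E$ contain no singular point of $\sat\tilde X$ whereas $p$ is one; so $D$ is invariant by $\sat\tilde X$, i.e.\ a separatrix of $\sat\tilde X_p$. This in particular disposes of the case in which $p$ is a corner of $E$ (both branches are then components of $E$), and when $p$ is not a corner it identifies the unique component $D$ of $E$ through $p$ with one of the two separatrices of $\sat\tilde X_p$; let $\Sigma$ be the other. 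Now suppose the second branch of $(E\cup\sing\tilde X)_p$ is the strict transform $\Gamma$ of a branch $\gamma$ of $\sing X$ (necessarily so, as $p$ is not a corner). Then $\gamma$ is a fixed separatrix of $X$, and by property~\ref{aaa}, $\Gamma$ is transverse to $E$ at the non-corner point $p$; in particular the strict transform of $\gamma$ meets $E$ at $p$. On the other hand, being transverse to $E$ at $p$, $\Sigma$ is itself the strict transform of a separatrix $\sigma$ of $X$, whose strict transform likewise meets $E$ at $p$. By the desingularization property~\ref{bbb}, two separatrices of $X$ whose strict transforms meet $E$ at the same point must coincide; hence $\gamma=\sigma$ and therefore $\Gamma=\Sigma$, a separatrix of $\sat\tilde X_p$. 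Thus every branch of $(E\cup\sing\tilde X)_p$ is a separatrix of $\sat\tilde X_p$, and the lemma follows.

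The genuinely delicate point is the invariance claim. A priori a branch of $E\cup\sing\tilde X$ through $p$ is only known to be a component of $E$ or the strict transform of a fixed separatrix of $X$, and there is no purely local mechanism forcing it to be a separatrix of the \emph{reduced} germ $\sat\tilde X_p$: a germ $g\cdot Y$ with $Y$ reduced may perfectly well have $\{g=0\}$ avoiding both separatrices of $Y$. What makes the lemma true is global information produced by the resolution — that dicritical components of $E$ carry no singular points of $\sat\tilde X$, and that $\pi$ desingularizes all separatrices of $X$ in the strong sense of properties~\ref{aaa}--\ref{bbb}, which controls, point by point, how their strict transforms meet $E$ and thereby forces the matching with the separatrices of the local reduced models.
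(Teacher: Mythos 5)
Your overall strategy---show each branch of $(E\cup\sing\tilde X)_p$ is invariant by $\sat\tilde X_p$, hence a separatrix, hence one of the two---is a sensible reorganization of the argument, and the part where the branch is in the strict transform of $\sing X$ is handled correctly from properties~\eqref{aaa} and~\eqref{bbb}. But there is a circularity in the other case. When the branch is a component $D$ of $E$, you invoke the fact that ``in a resolution the dicritical components of $E$ contain no singular point of $\sat\tilde X$'' to conclude that $D$ is invariant. In this paper that fact is \emph{not} part of the definition of a resolution (which only requires reduced singularities together with \eqref{aaa} and \eqref{bbb}); it is assertion~\eqref{gaga}, whose proof in the text begins with ``By Lemma~\ref{redutan}, if $\sat\tilde X_p$ were singular for some $p\in D$ then $D$ would be invariant.'' You are thus using a downstream consequence of the lemma to prove the lemma. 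And this is not a cosmetic dependence: for an arbitrary non-saturated $\tilde X$ with a reduced saturation, there is no local reason why a non-invariant branch of $E$ through $p$ should coincide with a separatrix of $\sat\tilde X_p$; the only thing that rules it out is the global desingularization of the separatrices of $X$.

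Once you try to justify that invoked fact independently, you are led exactly to the paper's own proof: if $D_p$ were not a separatrix of $\sat\tilde X_p$, then either $p$ is not a corner and both separatrices $S_1,S_2$ of $\sat\tilde X_p$ lie outside $E$, so they are strict transforms of two different separatrices of $X$ through the same point, contradicting \eqref{bbb}; or $p$ is a corner and at least one $S_i$ lies outside $E$, so it is the strict transform of a separatrix of $X$ at a corner, contradicting \eqref{aaa}. You need to supply this argument (or an equivalent one) rather than cite the property of dicritical components, which the paper only establishes afterwards. With that inserted, your proof becomes correct and is essentially a restructured version of the paper's.
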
 
\proof Let $S_1$ and $S_2$ be the separatrices of $\sat\tilde{X}_p$. If both $S_1$ and $S_2$ were not contained in $E$, they would be the strict transforms of different separatrices of $X$ passing through the same point in $E$, contradicting \eqref{aaa}, so we can assume that $S_1$ is contained in a component of $E$. Suppose that there is a branch $S$ of $(E\cup\sing \tilde X)_p$ different from  $S_1$ and $S_2$. If $S$ is a component of $E$ then $p$ is a corner and $S_2$ is the strict transform of a separatrix of $X$ passing through $p$, which contradicts \eqref{aaa}. If $S$ is a branch of $\sing\tilde X_p$ not contained in $E$ then $S$ is the strict transform of a separatrix of $X$, so by \eqref{aaa} $p$ is not a corner and then $S_2$ is also the strict transform of a separatrix of $X$ passing through $p$, contradicting \eqref{bbb}.\qed   

If $\tilde X$ is a resolution of $X$, we classify the components of the exceptional divisor in two types: 
\begin{enumerate}[(1)]
\item A component $D$ of $E$ is invariant if for some point $p\in D$ the germ $D_p$ is a separatrix of $\sat\tilde X_p$. In this case the same happens for any other point in $D$. 
\item\label{gaga} If a component $D$ of $E$ is not invariant, we say that it is dicritical. In this case $D\subset \sing\tilde X$ and, as we will see next, the vector field $\sat\tilde X_p$ is non-singular and transverse to $D$ for all $p\in D$, and any other component of $E$ intersecting $D$ is invariant. 
\end{enumerate}
Let us show the assertions in \eqref{gaga}. By Lemma~\ref{redutan}, if $\sat\tilde X_p$ were singular for some $p\in D$ then $D$ would be invariant, so
$\sat\tilde X_p$ is not singular and its formal integral curve $C$ is different from $D_p$ because $D$ is not invariant. If $p$ is not a corner, then $C$ is  the strict transform of a separatrix of $X$, so $C$ is transverse to $D$; if $p$ is a corner, from \eqref{aaa} we conclude that $C=D'_p$, where $D'$ is the other component of $E$ through $p$ and therefore $\sat\tilde X_p$ is transverse to $D$. This also shows that any component $D'$ of $E$ intersecting $D$ is invariant.  

\strut 

Consider now a unipotent biholomorphism $F$, i.e. $DF(0)=I+N$ where $I$ is the identity and $N$ is nilpotent. In a formal sense, $F$ is the time-$1$ flow of a unique formal vector field in $(\mathbb{C}^2,0)$, denoted $\log F$, which is singular at the origin and has $N$ as linear part. In particular, if $F$ is tangent to the identity then $\log F$ has order at least 2. Moreover, the fixed point set of $F$ coincides with the singular locus of $\log F$, which is therefore convergent. If $\pi$ is the blow-up at the origin, the map $\tilde F=\pi^{-1}\circ F\circ \pi$ is a biholomorphism in a neighborhood of $E=\pi^{-1}(0)$ which leaves $E$ invariant, and satisfies that $\log \tilde F_p=\tilde X_p$ for any fixed point $p\in E$ of $\tilde F$. 

\begin{theorem} Let $F$ be a unipotent biholomorphism in $(\mathbb{C}^2,0)$, let $\pi$ be a resolution of $\log F$ and let $\tilde F=\pi^{-1}\circ F\circ \pi$ be the transform of $F$ by $\pi$. Then, if $p\in E=\pi^{-1}(0)$ is a fixed point of $\tilde F$, the germ $\tilde{F}_p$ is analytically conjugate to one of the reduced models (i), (ii) and (iii) of the introduction. If $F$ is tangent to the identity and $\pi$ is the minimal resolution of $\log F$, then $E$ is pointwise fixed by $\tilde F$. 
\end{theorem}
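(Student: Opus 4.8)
The plan is to argue locally at each fixed point $p\in E$ of $\tilde F$ and to read off the structure of $\tilde F_p$ from that of the resolved vector field at $p$. Recall that $\log\tilde F_p=\tilde X_p$ (the germ at $p$ of the resolution $\tilde X$ of $X=\log F$), that $\Fix\tilde F_p=\sing\tilde X_p=(\sing\tilde X)_p$, and that $E\cup\sing\tilde X$, being the transform of $\sing X$, is a normal crossings divisor. First I would fix analytic coordinates $(x,y)$ centered at $p$ with $(E\cup\sing\tilde X)_p\subseteq\{xy=0\}$, taking $\{x=0\}$ to be an actual branch whenever $p$ lies on a component of the divisor. Since nilpotency propagates under resolution, $\tilde X_p$ is nilpotent, so $\tilde F_p=\exp\tilde X_p$ is unipotent; in particular $\tilde F$ cannot interchange the two branches of $E$ through $p$, so we may assume $\tilde F$ preserves each of $\{x=0\}$ and $\{y=0\}$.

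Next I would split according to Seidenberg's dichotomy for $\sat\tilde X_p$. \emph{Case 1: $\sat\tilde X_p$ is singular}, hence a reduced singularity. By Lemma~\ref{redutan} its two separatrices are smooth, transverse, and carry all branches of $(E\cup\sing\tilde X)_p$, so after an analytic coordinate change they become the axes $\{x=0\}$ and $\{y=0\}$. Write $\tilde X_p=f\cdot\sat\tilde X_p$ with $f=x^My^N$ up to a unit, $M,N\ge0$ the vanishing orders along the axes, and expand $\sat\tilde X_p=\lambda_1x\partial_x+\lambda_2y\partial_y+(\text{h.o.t.})$ in the non-degenerate case, or $\sat\tilde X_p=\lambda_1x\partial_x+(\text{h.o.t.})$ with coprime components and weak separatrix $\{y=0\}$ in the saddle-node case (relabeling axes if needed). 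Then I would exponentiate: $\tilde F_p-\id$ has the same leading (lowest-order) part as $\tilde X_p$, the iterated terms of $\exp\tilde X_p=\id+\tilde X_p+\tfrac12\tilde X_p(\tilde X_p)+\cdots$ being of strictly higher order, so reading off the two coordinates yields the shape of model (ii) with $a=\lambda_1$, $b=\lambda_2$ (whence $a/b\notin\Q_{>0}$ by reducedness) in the non-degenerate case, and the shape of model (iii) in the saddle-node case. \emph{Case 2: $\sat\tilde X_p$ is non-singular.} Here the dicritical/invariant analysis recalled above shows that the integral curve of $\sat\tilde X_p$ through $p$ is a coordinate axis -- transverse to $E$ at a non-corner, the second branch of $E$ at a corner -- and the same exponentiation gives the shape of model (i), with $(M,N)\notin\{(0,0),(1,0)\}$ because $\tilde F_p$ is unipotent and nontrivial. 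What remains is to impose the normalizations in the definitions of (i)--(iii): the order bounds on $A$ and $B$, the divisibilities $B\in(y)$ and $A\in(x)$, the coprimality of $x+A$ and $B$, and the condition $M\ge1$ (which is precisely the requirement that $p$ be strictly singular for $\tilde X$). Each of these is achieved by an explicit \emph{analytic} change of coordinates on the convergent germ $\tilde F_p$: straightening of the (strong/weak) separatrices, and Weierstrass division to push spurious low-order terms into higher order.

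For the final assertion, assume $F$ is tangent to the identity, so $X=\log F$ has zero linear part and is therefore nilpotent; then $\tilde X_p$ is nilpotent at every $p\in E$, so $\tilde X$ has no isolated singularity on $E$ and $\sing_E\tilde X$ is a union of components of $E$. I would argue by induction on the number of blow-ups composing $\pi$: one blow-up of a germ at one of its fixed points produces a transform that fixes the new exceptional divisor pointwise, and each center used in building $\pi$ is a fixed point of the transform of $F$ at that stage; the minimality of $\pi$ then rules out the only remaining possibility, namely that $\tilde F$ acts on some component $D\cong\P^1$ of $E$ as a nontrivial M\"obius map -- necessarily parabolic, by unipotence -- since its parabolic fixed point would be a non-reduced singularity of $\tilde X$, contradicting that $\pi$ is a resolution (or that it is the minimal one). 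Hence $\tilde F|_E=\id$.

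The step I expect to be the main obstacle is the passage, in Cases 1 and 2, from the purely formal identity $\log\tilde F_p=\tilde X_p$ to an honest \emph{analytic} conjugacy with one of the models: the logarithm of a convergent germ is a priori only a formal series, and the Poincar\'e--Dulac and Dulac normal forms available for $\sat\tilde X_p$ are themselves a priori only formal. The resolution is that (i)--(iii) are not rigid normal forms but ``prepared'' forms cut out by finitely many conditions: the discrete invariants $M,N$ and the leading coefficients $a,b$ are biholomorphic invariants read directly off the convergent $\tilde F_p$, while each preparedness condition is imposed by a convergent coordinate change. Checking that these finitely many convergent normalizations can be carried out simultaneously, and that $M\ge1$ holds exactly at the strictly singular points, is the technical heart of the argument.
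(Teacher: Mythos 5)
Your treatment of the first assertion (the reduced models) runs essentially parallel to the paper's: organize $(E\cup\sing\tilde X)_p$ via Lemma~\ref{redutan}, align analytic coordinates with those branches, split on Seidenberg's dichotomy for $\sat\tilde X_p$, and read off the form of $\tilde F_p$ from the time-$1$ flow. The paper is slightly more careful than your sketch about which separatrices get \emph{straightened} (only those contained in $\sing\tilde X_p$, hence convergent) and which are merely \emph{tangent} to an axis (the a priori formal weak separatrix of a saddle-node), which is exactly how the analytic-versus-formal tension you flag at the end is resolved; but the plan is the same.

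The proof of the second assertion has a genuine gap. Your starting claim — ``one blow-up of a germ at one of its fixed points produces a transform that fixes the new exceptional divisor pointwise'' — is false for a merely unipotent germ: it holds precisely when the germ is tangent to the identity, i.e.\ when the logarithm has order $\ge 2$. The whole content of the induction is to show that, at each successive blow-up center, the transform of $F$ is again tangent to the identity; equivalently, that the transformed vector field has order $\ge 2$ there. The paper proves exactly this: if $\hat X$ vanishes on the new divisor $D$ with order $1$, $D$ invariant, and $p\in D$ is a candidate center, write $\hat X_p=y\bigl[(a+\tilde A)\partial_x+(b+\tilde B)\partial_y\bigr]$; nilpotence forces $b=0$, and minimality forces $a=0$ (otherwise $\sat\hat X_p$ is non-singular and $p$ would not be blown up), whence $\ord\hat X_p\ge 2$. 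You skip this step and substitute the M\"obius-map argument, but its punchline is not substantiated: $\tilde X_q$ is nilpotent at \emph{every} $q\in E$, so ``$q$ would be a non-reduced singularity of $\tilde X$'' cannot be read off from nilpotence; what must be checked is that a nontrivial parabolic action of $\tilde F$ on a component $D$ is incompatible with $\sat\tilde X_q$ being reduced or regular at the parabolic fixed point $q$, and no such argument is given. Without the order estimate the induction does not close.
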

\proof Let $\tilde X$ be the transform of $X=\log F$ by $\pi$ and let $p\in E$ be a fixed point of $\tilde F$, so $p\in\sing\tilde X$. Suppose that $p$ belongs to a dicritical component $D$ of $E$. Then $D\subset\sing\tilde X_p$ and $\sat \tilde X_p$ is non-singular and transverse to $D$. Since $(D\cup \sing \tilde X)_p$ is smooth or has two smooth transverse branches, we can take holomorphic coordinates $(x,y)$ at $p$ such that $D=\{x=0\}$ and such that
$\{y=0\}$ is the other branch of $\sing \tilde X_p$ if it exists, so $\sing\tilde X_p\subset \{xy=0\}$. Then, up to rescaling the coordinates we have
$\tilde X_p=x^My^N\big[(1+\tilde A(x,y)){\partial}_x+ \tilde B(x,y){\partial}_y\big]$, 
where $\ord\tilde A\ge 1$, $M\ge 1$, $N\ge 0$ and $(M,N)\neq (1,0)$ because
 $\tilde X_p$ is nilpotent. Therefore its time-1 flow $\tilde F_p$ will be of the form (i) 
 once we show that $\tilde B\in (y)$ if $N\ge 1$. Suppose that $N\ge 1$, so 
 $\{y=0\}\subset \sing\tilde X_p$, and let $C$ be the formal integral curve of 
 $(1+\tilde A){\partial_x}+\tilde B {\partial_ y}$ through the origin. If $\{y=0\}$
  is a component of $E$ then $p$ is a corner and, in view of \eqref{aaa}, 
  necessarily $C=\{y=0\}$ and therefore $\tilde B\in (y)$. If $\{y=0\}$ is not a
   component of $E$, then it is the strict transform of a (fixed) separatrix of $X$ so,
    in view of \eqref{bbb}, it has to coincide with $C$ and again $\tilde B\in (y)$. 
    Suppose now that $p$ does not belong to a dicritical component of $E$. We 
    assume first that $\sat\tilde X_p$  is not singular. Take a component $D$ of $E$
     such that $p\in D$. As in the previous case, we have holomorphic coordinates 
     $(x,y)$ at $p$ such that $D=\{y=0\}$ and $\sing\tilde X_p\subset \{xy=0\}$. 
     Since $D$ is invariant, $\{y=0\}$ is the formal integral curve of  $\sat\tilde X_p$
      through $p$, so we can write
       $\sat\tilde X_p=(1+\hat A){\partial_ x}+\hat B{\partial_ y} $ with 
       $\hat B\in (y)$. Then, up to rescaling the coordinates, we have
        $\tilde X_p=x^My^N\big[(1+\tilde A){\partial_ x}+\tilde B{\partial_ y}\big]$, 
        where $M,N\notin\{(0,0),(1,0)\}$ because $\tilde X_p$ is singular and 
        nilpotent, so $\tilde F_p$ is of the form (i). Assume now that $\sat\tilde X_p$ 
        is singular, hence reduced. We suppose first that it is a saddle-node. 
        Let $S$ be the separatrix of $\sat\tilde X_p$ that is tangent to the 
        eigenspace associated to the nonzero eigenvalue of the linear part of
         $\sat\tilde X_p$, and let $S_0$ be the other separatrix. Let $(x,y)$ be
          holomorphic coordinates at $p$ such that $\{x=0\}$ and $\{y=0\}$ 
          are respectively tangent to $S_0$ and $S$, so we can write
           $\sat\tilde X_p=(x+\hat A)\partial_x+\hat B\partial_y$, 
           where $\ord \hat A, \ord \hat B\ge 2$ and $x+\hat A$
            and $\hat B$ have no common factors. We know that $\sing\tilde X_p$
             has one or two branches, which by
 Lemma \ref{redutan} are contained in $\{S,S_0\}$. Thus, we can assume that the coordinates
 are chosen in such a way that if $S_0\subset \sing\tilde X_p$ then $S_0=\{x=0\}$, so
 $\hat A\in (x)$, and if $S\subset \sing\tilde X_p$ then $S=\{y=0\}$, so $\hat B\in (y)$; 
hence $\sing\tilde X_p\subset\{xy=0\}$. Thus, up to rescaling the coordinates,
 we have
 $\tilde X_p=x^My^N\big[(x+\tilde A)\partial_x+\tilde B\partial_y\big]$, where $M+N\ge 1$, 
$\ord \tilde A, \ord\tilde B\ge 2$, $\tilde A\in (x)$ if $M\ge 1$, $\tilde B\in (y)$ if
 $N\ge 1$ and $x+\tilde A$ and $\tilde B$ have no common factors. Hence, 
 $\tilde F_p$ is of the form (iii). Finally, suppose that $\sat\tilde X_p$ is
  non-degenerate. Since $\sing \tilde X_p$ contains at least one branch, in 
  suitable coordinates $(x,y)$ we have $\{x=0\}\subset\sing\tilde X_p$ so, by 
  Lemma \ref{redutan}, the curve $\{x=0\}$ is a separatrix of $\sat\tilde X_p$. 
  As in the previous case, we can assume that the other separatrix $S$ of 
  $\sat\tilde X_p$ is tangent to $\{y=0\}$ and that $S=\{y=0\}\subset\sing\tilde X_p$
   if $\sing\tilde X_p$ has two branches, so $\sing\tilde X_p\subset\{xy=0\}$. 
   Therefore we have 
$\tilde X_p=x^My^N\big[(ax+\tilde Ax){\partial_ x}+(by+\tilde B){\partial_ y}\big]$, where $M\ge 1$, $N\ge 0$, $a,b\in\mathbb{C}^*$, $a/b\notin\mathbb{Q}_{>0}$, $\ord\tilde A\ge 1$, $\ord\tilde B\ge 2$ and $\tilde B\in (y)$ if $N\ge 1$, so $\tilde F_p$ is of the form (ii). 

In order to prove the last assertion of the theorem it suffices to show that if a vector field $X$ has order at least 2 and $\pi$ is its minimal resolution, then $\pi^{-1}(0)\subset\sing\tilde X$. Suppose that this property holds when the minimal resolution of $X$ is achieved with less than $n\in\mathbb{N}$ blow-ups, and let $X$ be a formal vector field with $\ord X\ge2$ whose minimal resolution is obtained with $n$ blow-ups. Let $\sigma$ be the blow-up at the origin and let $\hat X$ be the transform of $X$ by $\sigma$. Since $\ord X\ge 2$, we have that $\hat X$ vanishes on $D=\sigma^{-1}(0)$ with order $\nu\ge 1$ if $D$ is invariant or $\nu\ge 2$ if $D$ is dicritical. So $D$ will be in the singular locus of the resolution of $X$ and, in view of the inductive hypothesis, it is enough to show that $\hat X$ has order at least 2 at each point in $D$ that is blown-up in the resolution. Let $p\in D$ be one of such points. Since $\ord\hat X_p\ge \nu$, it suffices to consider the case where $D$ is invariant and $\nu=1$. We can also assume that $\sing \hat X_p =D_p$ and that $\hat X_p$ vanishes on $D_p$ with multiplicity one, because otherwise $\ord\hat X_p\ge 2$. Then, if $(x,y)$ are holomorphic coordinates at $p$ such that $D_p=\{y=0\}$, we have $\hat X_p=y\big[(a+\tilde A){\partial_ x}+(b+\tilde B){\partial_ y}\big]$, where $\ord A,\ord B\ge 1$, and necessarily $b=0$ because $\hat X_p$ is nilpotent. If $a\neq 0$, we see that $\hat X_p$ is actually in final form, so no further blow-up at $p$ would be necessary. Therefore $a=0$ and $\ord\hat X_p\ge 2$. \qed 

\begin{remark}
The final reduced models (i), (ii) and (iii) we consider correspond to the standard final models for vector fields, and are not exactly the same ones that appear in the resolution theorem in \cite{Aba}, since our set of reduced forms is stable under blow-ups, in the sense that any further blow-up will produce only biholomorphisms in the same set of reduced final models (for example, the dicritical fixed points considered as final models in \cite{Aba} can be reduced by additional blow-ups to non-dicritical models). In our final models, the blow-up of a map of the form (i) produces maps of the form (i), the blow-up of a map of the form (ii) produces maps of the form (i) and (ii) and the blow-up of a map of the form (iii) produces maps of the form (i), (ii) and (iii).
\end{remark}

\end{document}